\newtheorem{theorem}{Theorem}[section]
\newtheorem{lemma}[theorem]{Lemma}
\newtheorem{corollary}[theorem]{Corollary}
\theoremstyle{definition}
\newtheorem{definition}[theorem]{Definition}
\theoremstyle{remark}
\newtheorem*{remark}{Remark}
\numberwithin{equation}{section}
\newcommand{\RE}{\mbox{$\mathbb{R}$}}
\newcommand{\C}{\mbox{$\mathbb{C}$}}
\begin{document}

\title{Extension and approximation of $m$-subharmonic functions}

\author{Per \AA hag}\address{Department of Mathematics and Mathematical Statistics\\ Ume\aa \ University\\SE-901~87 Ume\aa, Sweden \\ Sweden}\email{Per.Ahag@math.umu.se}

\author{Rafa\l\ Czy{\.z}}\address{Faculty of Mathematics and Computer Science, Jagiellonian University, \L ojasiewicza~6, 30-348 Krak\'ow, Poland}
\thanks{The second-named author was partially supported by NCN grant DEC-2013/08/A/ST1/00312.}\email{Rafal.Czyz@im.uj.edu.pl}

\author{Lisa Hed}\address{Department of Mathematics and Mathematical Statistics\\ Ume\aa \ University\\SE-901~87 Ume\aa, Sweden \\ Sweden}\email{Lisa.Hed@math.umu.se}

\keywords{exhaustion function, $m$-subharmonic function, Jensen measure, approximation, Dirichlet problem}
\subjclass[2010]{Primary 32U05, 32F17; Secondary 46A55.}

\begin{abstract}
Let $\Omega\subset \mathbb C^n$ be a bounded domain, and let $f$ be a real-valued function defined on the whole
topological boundary $\partial \Omega$. The aim of this paper is to find a characterization of the functions $f$ which can be extended to
the inside to a $m$-subharmonic function under suitable assumptions on $\Omega$. We shall do so by using a function
algebraic approach with focus on $m$-subharmonic functions defined on compact sets. We end this note
with some remarks on approximation of $m$-subharmonic functions.
\end{abstract}

\maketitle

\section{Introduction}

 In potential theory the notion of subharmonic functions, $\mathcal{SH}$, is of fundamental importance,  and in pluripotential theory the notion of plurisubharmonic functions, $\mathcal{PSH}$, is of the same importance. In 1985,
 Caffarelli et al.~\cite{CHS} proposed a model that makes it possible to study the common properties of potential and pluripotential theories, as well as the transition between them. It also gives a splendid tool in geometric constructions.  The core focus of the Caffarelli-Nirenberg-Spruck framework is what is known today as $m$-subharmonic functions, $\mathcal{SH}_m$. These functions are considered in
 complex space, and on different types of complex manifolds. If $n$ is the underlying dimension, then it holds that
\[
\mathcal{PSH}=\mathcal{SH}_n \subset\cdots\subset \mathcal{SH}_1=\mathcal{SH}\, .
\]
To mention a few references related to the Caffarelli-Nirenberg-Spruck model~\cite{Blocki_weak, huang, Li, phong, WanWang}.

Let $\Omega\subset \mathbb C^n$ be a bounded domain, and let $f$ be a real-valued function defined on the topological boundary $\partial \Omega$. It is well-known that one can not always extend $f$ to the inside to a $m$-subharmonic function. This is not possible even in the cases $m=1$, and $m=n$. The aim of this paper is to find a characterization of the functions $f$ that have this classical extension property, but in the process we shall also be interested in when this extension can be approximated in neighborhoods of $\bar\Omega$. The first obstruction is that $\Omega$ is only
assumed to be a bounded domain. This does not yield a satisfying amount of $m$-subharmonic functions. Therefore, we assume
that there exists at least one non-constant and negative $m$-subharmonic function $\psi:\bar{\Omega}\to\RE$ such that for any $c\in\RE$ the set $\{x\in\Omega:\psi(x)<c\}$ is relatively compact in $\Omega$ (see Definition~\ref{def_msubkomp} for the meaning of being $m$-subharmonic on $\bar\Omega$). A bounded domain in $\C^n$ that satisfies this condition is called $P_m$-hyperconvex. More about this in Section~\ref{sec Pmhxdomains}.

 Inspired by the work of Poletsky~\cite{Po,PO2}, and Poletsky and Sigurdsson~\cite{PS}, we use ideas from the theory of  function algebras defined on a compact set. In the mentioned references, the authors use the beautiful and
 intricate holomorphic disk-theory. Within the Caffarelli-Nirenberg-Spruck framework there are no Poletsy disks except in  the case $m=n$. Therefore, we uses the idea of duality between functions and its corresponding Jensen measures. In Section~\ref{sec Jensen and msh}, we introduce and study necessary properties of $m$-subharmonic functions defined on a compact set in $\C^{n}$, and with the help of those results we arrive in Section~\ref{sec_extention} at the following theorem:

 \bigskip

 \noindent \textbf{Theorem~\ref{ext_in_pm_hyp}.} \emph{Let $\Omega$ be a bounded $P_m$-hyperconvex domain in $\mathbb C^n$,  $1\leq m\leq n$, and let $f$ be a real-valued function defined on $\partial \Omega$. Then the following are equivalent:}

\medskip

\begin{enumerate}\itemsep2mm

\item \emph{there exists $F\in \mathcal{SH}_m(\bar \Omega)$ such that $F=f$ on $\partial \Omega$;}

\item $f\in \mathcal{SH}_m(\partial \Omega)$.
\end{enumerate}

\medskip

\noindent\emph{Furthermore, if $f$ is continuous on $\partial \Omega$, then the function $F$ can be chosen to be continuous on $\bar{\Omega}$.}

\bigskip

Theorem~\ref{ext_in_pm_hyp} is the first result of this kind within the Caffarelli-Nirenberg-Spruck model. It should be emphasized that
this is not only the classical Dirichlet problem that in the case $m=1$, can be traced back to the work of Brelot, Lebesgue, Perron, Poincar\'{e}, Wiener, and others. This since $F\in \mathcal{SH}_m(\bar \Omega)$, and by Theorem~\ref{cont}, these functions can be characterize by approximation
on neighborhoods of $\bar \Omega$. If $m=n$, then Theorem~\ref{ext_in_pm_hyp} was proved in~\cite{HP}.

A natural question that arises from Theorem~\ref{ext_in_pm_hyp} is how to decided wether a function $u$ is in  $\mathcal{SH}_m(\bar \Omega)$ or not. From Theorem~\ref{thm_pmboundary} it follows that under the assumption that $\Omega\subset\C^n$ is a bounded open set, and that $u\in\mathcal{SH}_m(\bar \Omega)$, then $u\in \mathcal{SH}_m(\Omega)$, and $u\in\mathcal{SH}_m(\partial\Omega)$. The converse statement is not always true. But if we assume that $\Omega$ is $P_m$-hyperconvex, then we prove in Theorem~\ref{cor_msubbdvalue} that
\[
u\in\mathcal{SH}_m(\bar \Omega) \quad\Leftrightarrow\quad u\in\mathcal{SH}_m(\Omega) \text{ and } u\in\mathcal{SH}_m(\partial\Omega)\, .
\]
This justify further the study of the geometry of domains that admits a negative exhaustion function that belongs to $\mathcal{SH}_m(\bar \Omega)$. This is done in
Section~\ref{sec Pmhxdomains}. We end this note
with some concluding remarks on uniform approximation of $m$-subharmonic functions (Section~\ref{sec approx}).

\bigskip

Background information on potential theory can be found in~\cite{armitage,doob,landkof}, and for more information about pluripotential theory in~\cite{demailly_bok,K}. A beautiful treatise on subharmonic and plurisubharmonic functions is the monograph~\cite{hormander} written by H\"ormander. Definition and basic properties of $m$-subharmonic functions can be found in~\cite{SA}.

\bigskip

One concluding remark is in order. There are well-developed axiomatic, and algebraic, potential theories that could
have been deployed in connection with this paper. We have chosen not to do so, and leave it for others to
draw full benefits of these abstract models in order to learn more about the Caffarelli-Nirenberg-Spruck framework on compact sets. We want to mention the references~\cite{ArsoveLeutwiler, BlietnerHansen3, Constantinescu, Gamelin}.

\section{Jensen measures and $m$-subharmonic functions}\label{sec Jensen and msh}

In this section we will define the class of $m$-subharmonic function defined on a compact set, $X\subset \mathbb C^n$, and we will prove some properties of such functions. Among other things, we shall show that these functions are closely connected to approximation by $m$-subharmonic functions defined on strictly larger domains. But, first we need some notions and definitions. Let $\mathcal{SH}_m^o(X)$ denote the set of functions that are the restriction to $X$ of functions that are $m$-subharmonic and continuous on some neighborhood of $X$. Furthermore, let  $\mathcal{USC}(X)$ be the set of upper semicontinuous functions defined on $X$. Next, we define a class of Jensen measures.

\begin{definition}\label{def_JzmK} Let $X$ be a compact set in $\C^n$, $1\leq m\leq n$, and let $\mu$ be a non-negative regular Borel measure defined on $X$ with $\mu(X)=1$. We say that $\mu$ is a \emph{Jensen measure with barycenter} $z\in X$ \emph{w.r.t.} $\mathcal{SH}_m^o(X)$ if
  \[
  u(z)\leq \int_{X} u \, d\mu \qquad\qquad \text{for all } u  \in \mathcal{SH}_m^o(X)\, .
  \]
The set of such measures will be denoted by $\mathcal{J}_z^m(X)$.
\end{definition}

\begin{remark}
If $X_1\subset X_2$ are compact sets in $\mathbb C^n$, then for every $z\in X_1$ it holds
\[
\mathcal{J}_z^m(X_1)\subset \mathcal{J}_z^m(X_2)\, .
\]
\end{remark}

We shall need the following convergence result in $\mathcal{J}_z^m(X)$. It is obtained in a standard way using the Banach-Alaoglu theorem, and therefore the proof is omitted.

\begin{theorem} \label{thm_convmeasures}
  Let $X$ be a compact set in $\C^n$. Let $\{z_k\} \subset X$ be a sequence that is converging to $z$, as $k\to\infty$. For each $k$, let $\mu_k \in \mathcal{J}_{z_k}^m(X)$. Then there is a subsequence $\{\mu_{k_j}\}$, and a measure $\mu \in \mathcal{J}_z^m(X)$ such that $\mu_{k_j}$ converges weak-$^\ast$ to $\mu$.
\end{theorem}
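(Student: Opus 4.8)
The plan is to realize all the measures as elements of the dual of $C(X)$, apply weak-$^\ast$ sequential compactness to extract a convergent subsequence, and then verify that the limit inherits both the probability-measure normalization and the Jensen property. Since $X$ is a compact subset of $\C^n$ it is a compact metric space, so $C(X)$ is separable and, by the Riesz representation theorem, its dual is the space $M(X)$ of regular Borel measures on $X$ equipped with the total variation norm. Each $\mu_k$ is a probability measure, hence lies in the closed unit ball of $M(X)$.

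First I would invoke the Banach--Alaoglu theorem: the closed unit ball of $C(X)^\ast=M(X)$ is weak-$^\ast$ compact, and because $C(X)$ is separable this ball is weak-$^\ast$ metrizable, hence weak-$^\ast$ sequentially compact. Therefore $\{\mu_k\}$ admits a subsequence $\{\mu_{k_j}\}$ converging weak-$^\ast$ to some $\mu\in M(X)$, meaning $\int_X \varphi\,d\mu_{k_j}\to \int_X \varphi\,d\mu$ for every $\varphi\in C(X)$. That $\mu$ is again a probability measure follows immediately: testing against a nonnegative $\varphi\in C(X)$ gives $\int_X \varphi\,d\mu=\lim_j\int_X\varphi\,d\mu_{k_j}\geq 0$, so $\mu\geq 0$, while testing against the constant function $1$ (which belongs to $\mathcal{SH}_m^o(X)$) gives $\mu(X)=\lim_j\mu_{k_j}(X)=1$.

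The heart of the argument is passing the defining inequality to the limit. Fix $u\in\mathcal{SH}_m^o(X)$; by definition $u$ is the restriction of a continuous function, so $u\in C(X)$. For each $j$ we have $u(z_{k_j})\leq \int_X u\,d\mu_{k_j}$. On the right-hand side the weak-$^\ast$ convergence gives $\int_X u\,d\mu_{k_j}\to\int_X u\,d\mu$, while on the left-hand side the continuity of $u$ together with $z_{k_j}\to z$ gives $u(z_{k_j})\to u(z)$. Letting $j\to\infty$ yields $u(z)\leq\int_X u\,d\mu$, and since $u$ was arbitrary this shows $\mu\in\mathcal{J}_z^m(X)$.

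The only place where anything could go wrong is this final step, and the point worth emphasizing is that it succeeds precisely because the test class $\mathcal{SH}_m^o(X)$ consists of continuous functions, so both sides of the inequality are stable under the two limits taken simultaneously. Had one instead tested against merely upper semicontinuous $m$-subharmonic functions, the convergence of $u(z_{k_j})$ to $u(z)$ and of the integrals would be considerably more delicate; the continuity built into the definition of $\mathcal{SH}_m^o(X)$ is exactly what renders the passage to the limit routine, which is why the authors omit the details.
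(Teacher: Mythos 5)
Your proof is correct and follows exactly the route the paper indicates: the paper omits the argument as ``obtained in a standard way using the Banach--Alaoglu theorem,'' and your write-up---extraction of a weak-$^\ast$ convergent subsequence via Banach--Alaoglu (with metrizability from separability of $\mathcal{C}(X)$), verification that the limit is a probability measure, and passage to the limit in the Jensen inequality using the continuity of functions in $\mathcal{SH}_m^o(X)$---is precisely that standard argument, carried out carefully.
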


Using the Jensen measures in Definition~\ref{def_JzmK} we shall now define what it means for a function to be $m$-subharmonic on a compact set.

\begin{definition} \label{def_msubkomp}
   Let $X$ be a compact set in $\C^n$. An upper semicontinuous function $u$ defined on $X$ is said to be \emph{$m$-subharmonic on $X$}, $1\leq m\leq n$,  if
\[
u(z) \leq \int_X u \, d\mu\, , \  \text { for all } \ z \in X \  \text { and all }\  \mu \in \mathcal{J}_z^m(X)\, .
\]
The set of  $m$-subharmonic defined on $X$ will be denoted by $\mathcal{SH}_m(X)$.

\end{definition}

\begin{remark}
By definition, we see that $\mathcal{SH}_m^o(X) \subset \mathcal{SH}_m(X)$.
\end{remark}

It is easy to see that $m$-subharmonic functions on compact sets share a lot of basic properties with $m$-subharmonic functions on open sets. Some of these properties are listed below.

\begin{theorem} \label{thm_basicprop2}
Let $X$ be a compact set in $\mathbb C^n$, and $1\leq m\leq n$. Then

\medskip

    \begin{enumerate}\itemsep2mm
    \item if $u,v \in \mathcal{SH}_m(X)$, then $su+kv \in \mathcal{SH}_m(X)$ for $s,k\geq 0$;

    \item if $u,v \in \mathcal{SH}_m(X)$, then $\max\{u,v\}\in \mathcal{SH}_m(X)$;

    \item if $u_j\in \mathcal{SH}_m(X)$ is a decreasing sequence, then $u=\lim_{j\to \infty} u_j\in \mathcal{SH}_m(X)$, provided $u(z)>-\infty$ for some point $z\in X$;

    \item if $u\in \mathcal{SH}_m(X)$ and $\gamma:\mathbb R\to\mathbb R$ is a convex and nondecreasing function,  then $\gamma\circ u\in\mathcal{SH}_m(X)$.
        \end{enumerate}
\end{theorem}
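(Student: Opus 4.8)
The plan is to exploit the fact that the family of Jensen measures $\mathcal{J}_z^m(X)$ used to define $\mathcal{SH}_m(X)$ is fixed once and for all: it depends only on the cone $\mathcal{SH}_m^o(X)$ of continuous test functions, not on the particular $u\in\mathcal{SH}_m(X)$ under consideration. Consequently, for each of the four assertions it suffices to check two things: first, that the resulting function is again upper semicontinuous; and second, that it inherits the defining inequality $w(z)\le\int_X w\,d\mu$ for every $z\in X$ and every $\mu\in\mathcal{J}_z^m(X)$. Since $\mu$ is a probability measure and every $u\in\mathcal{SH}_m(X)$ is bounded above on the compact set $X$, all integrals appearing below are well defined with values in $[-\infty,+\infty)$.

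For (1) and (2) both steps are immediate. If $u,v\in\mathcal{SH}_m(X)$ and $s,k\ge 0$, then $su+kv$ is upper semicontinuous, and for any admissible pair $(z,\mu)$ linearity of the integral gives
\[
(su+kv)(z)=s\,u(z)+k\,v(z)\le s\int_X u\,d\mu+k\int_X v\,d\mu=\int_X (su+kv)\,d\mu.
\]
Likewise $\max\{u,v\}$ is upper semicontinuous, and from $u(z)\le\int_X u\,d\mu\le\int_X \max\{u,v\}\,d\mu$ together with the analogous inequality for $v$ one obtains $\max\{u,v\}(z)\le\int_X \max\{u,v\}\,d\mu$ by taking the maximum of the two left-hand sides.

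For (3) I would first recall that a decreasing limit of upper semicontinuous functions is again upper semicontinuous, so $u=\lim_{j\to\infty}u_j$ lies in $\mathcal{USC}(X)$; the hypothesis $u(z)>-\infty$ at some point guarantees that $u$ is a genuine function rather than the constant $-\infty$. Fixing $(z,\mu)$, the inequalities $u_j(z)\le\int_X u_j\,d\mu$ hold for all $j$. Since $u_j\downarrow u$ with $u_1$ bounded above and $\mu(X)=1$, the monotone convergence theorem for decreasing sequences yields $\int_X u_j\,d\mu\to\int_X u\,d\mu$, while $u_j(z)\to u(z)$; passing to the limit gives the desired inequality.

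The one part that uses more than linearity and monotonicity is (4), and this is where I would place the only genuinely substantive step. Extending $\gamma$ by $\gamma(-\infty):=\lim_{t\to-\infty}\gamma(t)$, the composition $\gamma\circ u$ is upper semicontinuous because $\gamma$ is continuous and nondecreasing. For the Jensen inequality I would chain monotonicity of $\gamma$ with the classical Jensen inequality for convex functions against the probability measure $\mu$:
\[
\gamma(u(z))\le\gamma\!\left(\int_X u\,d\mu\right)\le\int_X \gamma\circ u\,d\mu,
\]
where the first inequality uses $u(z)\le\int_X u\,d\mu$ together with $\gamma$ being nondecreasing, and the second is the standard convexity estimate. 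The minor technical point to dispatch is the case $\int_X u\,d\mu=-\infty$, which forces $u(z)=-\infty$ and reduces the claim to the trivial bound $\gamma(-\infty)=\inf\gamma\le\int_X \gamma\circ u\,d\mu$. I expect no real obstacle beyond being careful with these extended-real-valued edge cases in (3) and (4); the heart of (4) is simply that composing a convex nondecreasing function with the Jensen inequality preserves it.
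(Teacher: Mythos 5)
Your proof is correct and takes essentially the same route as the paper's own (much terser) proof: parts (1) and (2) straight from Definition~\ref{def_msubkomp} using that the family $\mathcal{J}_z^m(X)$ is fixed, part (3) by monotone convergence, and part (4) by the classical Jensen inequality combined with the monotonicity of $\gamma$. You simply make explicit the upper-semicontinuity checks and the extended-real-valued edge cases that the paper leaves implicit.
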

\begin{proof}
Properties $(1)$ and $(2)$ follows by Definition \ref{def_msubkomp}. To prove $(3)$, let $u_j\searrow u$. Then we
have that $u\in \mathcal{USC}(\bar\Omega)$. For $z\in X$, $\mu\in \mathcal J_z^m(X)$, we have by the monotone convergence theorem that
\[
u(z)=\lim_{j\to \infty}u_j(z)\leq \lim_{j\to \infty}\int u_j\,d\mu=\int\lim_{j\to \infty}d\mu=\int u\,d\mu\, .
\]
Part $(4)$ is a consequence of the Jensen inequality.
\end{proof}

The set $\mathcal{SH}_m^o(X)$ is a convex cone of continuous functions containing the constants, and separating points, and therefore we can apply the techniques of Choquet theory to get the following two versions of  Edwards' duality theorem.
Generalizations of Edwards' Theorem can be found in~\cite{GogusPerkinsPoletsky}.

\begin{theorem}\label{thm_edwards}
Let $X$ be a compact subset in $\mathbb C^n$, $1\leq m\leq n$, and let $\phi$ be a real-valued lower semicontinuous function defined
on $X$. Then we have
\begin{enumerate}
 \item[(a)]
\[
\sup\left \{\psi(z):\psi \in \mathcal{SH}_m^o(X) , \psi \leq \phi \right\}=\inf\left \{\int \phi \, d\mu : \mu \in \mathcal{J}_z^m(X)\right\}\, ,
\ \text {and}
\]
\item[(b)]
\begin{multline*}
\sup\left\{\psi (z) : \psi \in \mathcal{SH}_m(X)\cap \mathcal{C}(X), \psi \leq \phi\right\}\\=\sup\left\{\psi (z) : \psi \in \mathcal{SH}_m(X), \psi \leq \phi\right\}
=\inf\left\{\int \phi \, d\mu: \mu \in \mathcal{J}_z^m(X)\right\}\, .
\end{multline*}
\end{enumerate}
\end{theorem}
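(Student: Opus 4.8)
The plan is to prove both duality formulas (a) and (b) by invoking Edwards' theorem in its abstract Choquet-theoretic form, applied to the convex cone $\mathcal{SH}_m^o(X)$ of continuous functions. The key structural observation, already noted in the excerpt just before the statement, is that $\mathcal{SH}_m^o(X)$ is a convex cone containing the constants and separating points of $X$; these are exactly the hypotheses under which Edwards' duality applies. For part (a), Edwards' theorem states precisely that the upper envelope of the cone below a lower semicontinuous majorant $\phi$ coincides, pointwise at each $z$, with the infimum of $\int \phi\,d\mu$ taken over the measures $\mu$ that are dominated by the cone at $z$ --- and the set of such dominating measures is by Definition~\ref{def_JzmK} exactly $\mathcal{J}_z^m(X)$. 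So (a) is essentially a direct citation of the abstract theorem once we match up the notation.

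First I would carry out this identification carefully: verify that the Jensen measures associated to the cone $\mathcal{SH}_m^o(X)$ in the sense required by Edwards (measures $\mu$ with $u(z)\le\int u\,d\mu$ for all $u$ in the cone) are literally the measures in $\mathcal{J}_z^m(X)$, and that lower semicontinuity of $\phi$ together with compactness of $X$ guarantees the sup-side envelope is realized within the cone. This gives formula (a) immediately. The content of part (b) is the enlargement of the test class from $\mathcal{SH}_m^o(X)$ to the a priori larger classes $\mathcal{SH}_m(X)\cap\mathcal{C}(X)$ and $\mathcal{SH}_m(X)$, while the Jensen measures on the right-hand side stay the same.

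For (b) the strategy is a squeeze argument between the three quantities. The middle-to-right inequality is the crucial link: I would show
\[
\sup\left\{\psi(z):\psi\in\mathcal{SH}_m(X),\ \psi\le\phi\right\}\le\inf\left\{\int\phi\,d\mu:\mu\in\mathcal{J}_z^m(X)\right\}\, .
\]
This follows directly from Definition~\ref{def_msubkomp}: any competitor $\psi\in\mathcal{SH}_m(X)$ with $\psi\le\phi$ satisfies, for every $\mu\in\mathcal{J}_z^m(X)$, the chain $\psi(z)\le\int\psi\,d\mu\le\int\phi\,d\mu$, the first inequality being the defining property of $\mathcal{SH}_m(X)$ and the second using $\psi\le\phi$. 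Taking sup over $\psi$ on the left and inf over $\mu$ on the right yields the claim. In the opposite direction I would use the trivial inclusions $\mathcal{SH}_m^o(X)\subset\mathcal{SH}_m(X)\cap\mathcal{C}(X)\subset\mathcal{SH}_m(X)$, which force the three suprema to increase from left to right; combined with part (a) identifying the smallest supremum with the common right-hand infimum, the squeeze closes and all three quantities are forced to be equal.

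The main obstacle, and the only step requiring genuine care rather than bookkeeping, is the invocation of Edwards' theorem for part (a): I must confirm that the abstract hypotheses are met, in particular that $\phi$ being real-valued and lower semicontinuous on the compact set $X$ is enough (boundedness follows automatically on a compact set for such $\phi$ only from below, so I would check whether Edwards' formulation needs $\phi$ bounded, and if so note that on compact $X$ a real-valued lower semicontinuous $\phi$ attains its infimum and is thus bounded below, with the sup-envelope still well-defined). The remaining inequalities in (b) are formal consequences of the definitions and the inclusions between the function classes, so once (a) is secured via Edwards and the definitional inequality for $\mathcal{SH}_m(X)$ is recorded, the proof assembles quickly.
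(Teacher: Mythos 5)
Your proposal is correct, and part (a) is handled exactly as in the paper (a direct citation of Edwards' duality theorem for the cone $\mathcal{SH}_m^o(X)$, after checking it is a convex cone of continuous functions containing the constants and separating points). For part (b), however, you take a genuinely different and in fact more elementary route. The paper proves (b) by introducing the measure classes $\mathcal{M}_z^m(X)$ and $\mathcal{N}_z^m(X)$ of Jensen measures with respect to the larger cones $\mathcal{SH}_m(X)\cap\mathcal{C}(X)$ and $\mathcal{SH}_m(X)$, and showing $\mathcal{J}_z^m(X)=\mathcal{M}_z^m(X)=\mathcal{N}_z^m(X)$; the nontrivial inclusion $\mathcal{J}_z^m(X)\subset\mathcal{N}_z^m(X)$ is obtained from the decreasing-approximation result (Theorem~\ref{cont} part (b)) together with monotone convergence, which is why the paper must postpone the proof of (b) until after Theorem~\ref{cont}. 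Your squeeze argument bypasses all of this: the inequality
\[
\sup\left\{\psi(z):\psi\in\mathcal{SH}_m(X),\ \psi\le\phi\right\}\;\le\;\inf\left\{\int\phi\,d\mu:\mu\in\mathcal{J}_z^m(X)\right\}
\]
is immediate from Definition~\ref{def_msubkomp}, since membership in $\mathcal{SH}_m(X)$ is \emph{defined} by the sub-mean-value inequality against every $\mu\in\mathcal{J}_z^m(X)$; combined with the inclusions $\mathcal{SH}_m^o(X)\subset\mathcal{SH}_m(X)\cap\mathcal{C}(X)\subset\mathcal{SH}_m(X)$ and part (a), all three quantities collapse. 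This makes (b) essentially definitional given (a), removes the dependence on Theorem~\ref{cont} (and hence the need to postpone the proof), and is logically cleaner since Theorem~\ref{cont} itself uses part (a). What the paper's longer route buys is the stronger standalone fact that the three Jensen measure classes coincide, which is reusable information beyond the envelope identity; your argument yields only the envelope equalities, but does so with less machinery. Your side remarks are also sound: a real-valued lower semicontinuous $\phi$ on a compact set attains its infimum, so boundedness below is automatic, and the integrals $\int\psi\,d\mu$ for upper semicontinuous $\psi$ are well defined in $[-\infty,+\infty)$.
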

\begin{proof}
Part (a) is the direct consequence of Edwards' Theorem, and the proof of part (b) is postponed until after Theorem~\ref{cont} is proved.
\end{proof}

One important reason to study $m$-subharmonic functions on compact sets is that they are connected to approximation. In the case $m=1$, Theorem~\ref{cont} goes back to Debiard and Gaveau~\cite{DebiardGaveau}, and  Bliedtner and Hansen~\cite{BlietnerHansen1,BlietnerHansen2}(see also~\cite{perkins,perkins2}). In the case $m=n$, part $(a)$, was shown by Poletsky in \cite{Po}, and part $(b)$ in~\cite{CHP} . In
Section~\ref{sec approx}, we shall have some concluding remarks in connection with this type of approximation.

\begin{theorem}\label{cont} Let $X \subset \C^n$ be a compact set, and $1 \leq m \leq n$.
\begin{itemize}\itemsep2mm
\item[$(a)$] Let $u\in \mathcal {USC}(X)$. Then $u \in \mathcal{SH}_m(X)\cap \mathcal{C}(X)$ if, and only if, there is a sequence $u_j \in \mathcal{SH}_m^o(X)$ such that $u_j \nearrow u$ on $X$.

\item[$(b)$]  Then $u \in \mathcal{SH}_m(X)$ if, and only if, there is a sequence $u_j \in \mathcal{SH}_m^o(X)$ such that $u_j \searrow u$.
\end{itemize}
\end{theorem}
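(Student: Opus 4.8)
The plan is to prove the two approximation characterizations in Theorem~\ref{cont} separately, treating each direction of each equivalence. For part $(a)$, the easy implication is that if $u_j \in \mathcal{SH}_m^o(X)$ and $u_j \nearrow u$, then $u$ is continuous (as an increasing limit of continuous functions that is also upper semicontinuous, hence the limit is continuous by Dini-type reasoning once we know $u \in \mathcal{USC}(X)$) and $u \in \mathcal{SH}_m(X)$; the latter follows directly from the definition, since for any $z$ and any $\mu \in \mathcal{J}_z^m(X)$ we have $u_j(z) \leq \int u_j \, d\mu \leq \int u \, d\mu$, and letting $j \to \infty$ gives $u(z) \leq \int u \, d\mu$. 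The substantive direction is the converse: given $u \in \mathcal{SH}_m(X) \cap \mathcal{C}(X)$, construct an increasing sequence in $\mathcal{SH}_m^o(X)$ converging to $u$.

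My approach to this hard direction would be to use the Edwards duality in Theorem~\ref{thm_edwards}(a) together with a supremum-envelope construction. For each continuous $\phi$ with $\phi \geq u$, define the upper envelope $S\phi(z) = \sup\{\psi(z) : \psi \in \mathcal{SH}_m^o(X),\ \psi \leq \phi\}$, which by part (a) of Edwards equals $\inf\{\int \phi \, d\mu : \mu \in \mathcal{J}_z^m(X)\}$. The key point is that since $u \in \mathcal{SH}_m(X)$, we have $u(z) \leq \int u \, d\mu \leq \int \phi \, d\mu$ for every $\mu \in \mathcal{J}_z^m(X)$, so $u \leq S\phi$, while clearly $S\phi \leq \phi$. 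The strategy is then to exhaust $u$ from above by a decreasing sequence of continuous functions $\phi_j \searrow u$ (possible because $u$ is continuous), and to show that $S\phi_j$ is itself an increasing-in-$j$ family squeezed between $u$ and $\phi_j$, so that $S\phi_j \to u$. The remaining task is to produce, for each fixed $j$, an honest element of $\mathcal{SH}_m^o(X)$ that approximates the envelope $S\phi_j$ from below; this is where one invokes a Choquet-type selection together with the fact that the envelope of continuous $m$-subharmonic candidates can be realized, up to $\varepsilon$, by a single continuous $m$-subharmonic function on a neighborhood.

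The main obstacle, I expect, is precisely this last realization step: the Edwards supremum is a pointwise supremum over a family in $\mathcal{SH}_m^o(X)$, and one must upgrade it to a genuine monotone sequence in $\mathcal{SH}_m^o(X)$ converging to $u$ on all of $X$ simultaneously. This requires a compactness argument, using Theorem~\ref{thm_convmeasures} to control the infimum over Jensen measures uniformly in $z$, combined with Hartogs-type or regularization lemmas to glue finitely many local approximants (via property $(2)$ of Theorem~\ref{thm_basicprop2}, the max of finitely many $m$-subharmonic functions is $m$-subharmonic) into one continuous function $m$-subharmonic on a neighborhood of $X$. For part $(b)$, I would argue analogously but now approximating a general (possibly discontinuous) $u \in \mathcal{SH}_m(X)$ from above: the easy direction again follows from the monotone convergence theorem as in Theorem~\ref{thm_basicprop2}(3), while for the nontrivial direction I would apply part $(a)$ to continuous functions dominating $u$ and pass to a decreasing sequence, exploiting that $u$ as an upper semicontinuous function is the pointwise infimum of a decreasing sequence of continuous functions, each of which can be approximated by elements of $\mathcal{SH}_m^o(X)$ using the duality. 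Throughout, the delicate technical points are the continuity of the limit in part $(a)$ and the uniform (in barycenter $z$) behavior of the Jensen-measure infimum, which is where the weak-$^\ast$ compactness from Theorem~\ref{thm_convmeasures} does the real work.
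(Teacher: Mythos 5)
There is a genuine gap in your treatment of part $(a)$, the increasing approximation. In your envelope scheme the monotonicity runs the wrong way: if $\phi_j \searrow u$, then the candidate sets $\{\psi \in \mathcal{SH}_m^o(X) : \psi \leq \phi_j\}$ shrink, so $S\phi_j$ is \emph{decreasing} in $j$, not increasing as you claim. Worse, the approximants you would extract below $S\phi_j$ need not lie below $u$ (for instance, when $\mathcal{J}_z^m(X)=\{\delta_z\}$ for every $z$ one has $S\phi_j=\phi_j>u$), so no selection from this scheme can produce the required sequence $u_j \in \mathcal{SH}_m^o(X)$ with $u_j \nearrow u$: forcing monotonicity by taking maxima yields a limit that may strictly exceed $u$ at points where early approximants overshoot. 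The repair is the paper's route: apply Theorem~\ref{thm_edwards}(a) to $\phi=u$ itself, legitimate since $u$ is continuous. Because $\delta_z \in \mathcal{J}_z^m(X)$ and $u \in \mathcal{SH}_m(X)$, one gets $u(z)=\inf\{\int u\,d\mu : \mu\in\mathcal{J}_z^m(X)\}=\sup\{\psi(z):\psi\in\mathcal{SH}_m^o(X),\ \psi\leq u\}$, and since the candidates are continuous minorants of $u$, Choquet's lemma (Lemma~2.3.4 in~\cite{K}) extracts an increasing sequence in $\mathcal{SH}_m^o(X)$ converging to $u$; this is where Choquet's lemma belongs, not in the realization step where you placed it. (Two smaller points: in the easy direction no Dini-type reasoning is needed, since $u$ is lower semicontinuous as a supremum of continuous functions and upper semicontinuous by hypothesis, hence continuous; and a decreasing sequence of merely lower semicontinuous envelopes does not satisfy Dini's theorem, so your ``squeeze to uniform convergence'' step is itself unsound.)

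For part $(b)$ your outline is closer to the paper's argument, but the central step is asserted rather than proved, and the reduction ``apply part $(a)$ to continuous functions dominating $u$'' is not available, since those dominating functions are not $m$-subharmonic. What is actually needed is the lemma: for every $f\in\mathcal{C}(X)$ with $u<f$ on $X$ there is a single $v\in\mathcal{SH}_m^o(X)$ with $u<v\leq f$. The paper obtains this by realizing the Edwards infimum $F(z)=\inf\{\int f\,d\mu:\mu\in\mathcal{J}_z^m(X)\}$ by a measure $\mu_z$ (weak-$^\ast$ compactness of $\mathcal{J}_z^m(X)$ via Banach--Alaoglu, for each fixed $z$ --- Theorem~\ref{thm_convmeasures} is not the mechanism here), deducing the strict inequality $F(z)=\int f\,d\mu_z>\int u\,d\mu_z\geq u(z)$ --- this is exactly where the hypothesis $u\in\mathcal{SH}_m(X)$ enters --- then choosing for each $z$ a candidate $v_z\in\mathcal{SH}_m^o(X)$ with $u(z)<v_z(z)\leq F(z)$, covering $X$ by the open sets $\{u-v_z<0\}$ (open by upper semicontinuity of $u-v_z$), and taking the maximum of finitely many $v_{z_i}$. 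Your ``compactness plus max-gluing'' gesture matches this covering argument, but you omit the strict-separation step via the realizing measure and instead invoke as known (``the envelope \ldots can be realized, up to $\varepsilon$'') the very statement to be proved; no Hartogs-type or regularization lemma is involved. Finally, to obtain a genuinely decreasing sequence you need the inductive constraint $v_{k+1}\leq\min\{f_{k+1},v_k\}$ with $f_j\searrow u$ continuous, which your sketch leaves out.
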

\begin{proof} \emph{Part $(a)$:} Let $u \in \mathcal{SH}_m(X)\cap \mathcal{C}(X)$. Since the Dirac measure $\delta_z$ is in  $\mathcal{J}_z^m(X)$,
we have that
\[
u(z)=\inf\left\{\int u \, d\mu: \mu \in \mathcal{J}_z^m(X)\right\}\, .
\]
Theorem \ref{thm_edwards} part (a), yields now that
\[
u(z)=\inf\left \{\int u \, d\mu: \mu \in \mathcal{J}_z^m(X)\right\}=\sup\left\{\varphi(z): \varphi \in \mathcal{SH}_m^o(X), \varphi \leq u\right\}\, .
\]
Since the functions in $\mathcal{SH}_m^o(X)$ are continuous, Choquet's lemma (see e.g. Lemma~2.3.4 in~\cite{K}) says that
there exists a sequence $u_j \in \mathcal{SH}_m^o(X)$ such that $u_j \nearrow u$.

Now assume that there exists a sequence
$u_j \in \mathcal{SH}_m^o(X)$ such that $u_j \nearrow u$. Then $u$ can be written as the supremum of continuous functions.
Hence, $u$ is lower semicontinuous. Thus, $u$ is continuous. Let $z \in X$, and $\mu \in \mathcal{J}_z^m(X)$, then
\[
u(z)=\lim_j u_j(z)\leq \lim_j \int u_j \, d\mu =\int \lim_j u_j \, d\mu =\int u \, d\mu\, .
\]
By Definition~\ref{def_msubkomp} we know that $u \in \mathcal{SH}_m(X)$.

\bigskip

\emph{Part $(b)$:} First assume that $u$ is the pointwise limit of a decreasing sequence of functions $u_j \in \mathcal{SH}_m^o(X)$.
Then we have that $u\in \mathcal {USC}(X)$. Let $z \in X$ and $\mu \in \mathcal{J}_z^m(X)$, then it follows that
\[
u(z)=\lim_j u_j(z)\leq \lim_j \int u_j \, d\mu= \int\lim_j\,d\mu=\int u \, d\mu\, .
\]
Hence $u \in \mathcal{SH}_m(X)$.

For the converse, assume that $u \in \mathcal{SH}_m(X)$. We now want to show that there is a sequence of
functions $u_j \in \mathcal{SH}_m^o(X)$ such that $u_j \searrow u$ on $X$. We begin by showing that for every $f \in \mathcal{C}(X)$ with $u < f$
on $X$, we can find $v \in  \mathcal{SH}_m^o(X)$ such that $u < v \leq f$. Let
\[
F(z)=\sup\{\varphi(z): \varphi \in \mathcal{SH}_m^o(X), \varphi \leq f\} \, .
\]
From Theorem \ref{thm_edwards} part (a) it follows now that
\[
  F(z)=\inf\left\{\int f \, d\mu : \mu \in \mathcal{J}_z^m(X)\right\}\, .
\]
From the Banach-Alaoglu theorem we know that $\mathcal{J}_z^m(X)$ is weak-$^\ast$ compact, hence for all $z \in X$ we can find
$\mu_z \in \mathcal{J}_z^m(X)$ such that
\[
F(z)=\int f \, d\mu_z\, .
\]
We have
\[
F(z)=\int f \, d\mu_z > \int u \, d\mu_z \geq u(z)\, .
\]
Hence, $u < F$. By the construction of $F$ we know that for every given $z \in X$, there exists a function $v_z \in \mathcal{SH}_m^o(X)$ such
that $v_z \leq F$ and $u(z)<v(z) \leq F(z).$ Since the function $u-v_z$ is upper semicontinuous, then the set
\[
U_z=\{y\in X: u(y)-v_z(y)<0\}
\]
is open in $X$. By assumption $X$ is compact, and therefore there are finitely many points $z_1,\ldots,z_k$ with corresponding functions
$v_{z_1},\ldots,v_{z_k}$, and open sets $U_{z_1},\ldots, U_{z_k}$, such that $u < v_{z_j}$ on $U_{z_j}$. Furthermore,
\[
X=\bigcup_{j=1}^k U_{z_j}\, .
\]
The function $v=\max\{v_{z_1},\ldots,v_{z_k}\}$ is in $\mathcal{SH}_m^o(X)$, and $u < v \leq f$. We are now ready to prove that $u$ can be
approximated as in the statement in the theorem. The function $u$ is upper semicontinuous, and therefore it can be approximated with a decreasing
sequence $\{f_j\}$ of continuous functions. We can then find $v_1 \in \mathcal{SH}_m^o(X)$ such that $u < v_1 \leq f_1$. If we now assume that we
can find a decreasing sequence of functions $\{v_1,\ldots,v_k\}$ such that $v_j \in \mathcal{SH}_m^o(X)$, and $u < v_j$ for $j=1,\ldots,k$, then we can
find a function $v_{k+1} \in \mathcal{SH}_m^o(X)$ such that $u<v_{k+1}$ and $v_{k+1} \leq \min\{f,v_k\}$. Now the conclusion of the theorem follows by
induction.
\end{proof}

\begin{remark} In Theorem~\ref{cont} part $(a)$ we have uniform approximation on $X$. One can assume that the decreasing sequence in Theorem~\ref{cont}
part $(b)$ is smooth. This follows from a standard diagonalization argument.
\end{remark}

\begin{proof}[Proof of Theorem~\ref{thm_edwards} part (b).]
Let us define the following families of probability measures defined on $X$
\[
\begin{aligned}
&\mathcal{M}_z^m(X)=\left \{\mu: \, u(z)\leq \int u\, d\mu\, , \,\,  \forall u\in \mathcal{SH}_m(X)\cap \mathcal C(X)\right \}, \\
&\mathcal{N}_z^m(X)=\left \{\mu: \, u(z)\leq \int u\, d\mu\, , \,\, \forall u\in \mathcal{SH}_m(X) \right \}\, . \\
\end{aligned}
\]
We have
\[
\mathcal{N}_z^m(X)\subset \mathcal{M}_z^m(X)\subset \mathcal{J}_z^m(X)\, ,
\]
since $\mathcal{SH}_m^o(X)\subset\mathcal{SH}_m(X)\cap \mathcal C(X)\subset \mathcal{SH}_m(X)$. On the other hand, let $z\in X$, $\mu \in \mathcal{J}_z^m(X)$,  and let $\varphi\in \mathcal{SH}_m(X)$, then by Theorem~\ref{cont} part (b) there exists a decreasing sequence $u_j\in \mathcal{SH}_m^o(X)$ such that $u_j\to \varphi$, when $j\to \infty$, and then we have
\[
\varphi(z)=\lim_{j\to \infty}u_j(z)\leq \lim_{j\to \infty}\int u_j\,d\mu=\int\lim_{j\to \infty}u_j\,d\mu=\int\varphi\, d\mu\, .
\]
Hence, $\mu\in \mathcal{N}_m(X)$, and therefore $\mathcal{J}_z^m(X)\subset \mathcal{N}_z^m(X)$.
\end{proof}

A direct consequence of Theorem~\ref{cont} part $(b)$ is the following corollary.

\begin{corollary}\label{cor}
If $X_1\subset X_2$ are compact sets in $\mathbb C^n$, then $\mathcal{SH}_m(X_2)\subset \mathcal{SH}_m(X_1)$.
\end{corollary}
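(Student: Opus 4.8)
The plan is to deduce the inclusion directly from the approximation characterization in Theorem~\ref{cont} part $(b)$, exploiting the fact that the cone $\mathcal{SH}_m^o$ behaves well under restriction to a smaller compact set. The whole argument is a transfer of a defining decreasing sequence from $X_2$ down to $X_1$.

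First I would record the following elementary observation. Suppose $g$ is $m$-subharmonic and continuous on some neighborhood $U$ of $X_2$. Since $X_1\subset X_2\subset U$, the set $U$ is also a neighborhood of $X_1$, and hence the restriction $g|_{X_1}$ belongs to $\mathcal{SH}_m^o(X_1)$. In other words, restriction maps $\mathcal{SH}_m^o(X_2)$ into $\mathcal{SH}_m^o(X_1)$, straight from the definition of these cones as restrictions of functions living on neighborhoods.

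Next, take $u\in\mathcal{SH}_m(X_2)$. By the forward implication of Theorem~\ref{cont} part $(b)$ applied on $X_2$, there is a sequence $u_j\in\mathcal{SH}_m^o(X_2)$ with $u_j\searrow u$ on $X_2$. Restricting to $X_1$, the observation above gives $u_j|_{X_1}\in\mathcal{SH}_m^o(X_1)$, and clearly $u_j|_{X_1}\searrow u|_{X_1}$ pointwise on $X_1$. Applying the converse implication of Theorem~\ref{cont} part $(b)$ on $X_1$ then yields $u|_{X_1}\in\mathcal{SH}_m(X_1)$, which is exactly the claimed inclusion $\mathcal{SH}_m(X_2)\subset\mathcal{SH}_m(X_1)$.

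I do not expect a genuine obstacle here; the entire content is the restriction property of $\mathcal{SH}_m^o$, which is immediate. The only point deserving a moment's care is to confirm that the monotone convergence $u_j\searrow u$ survives restriction (it does, being a pointwise statement), so that the hypotheses of Theorem~\ref{cont} part $(b)$ are satisfied verbatim on $X_1$.
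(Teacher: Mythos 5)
Your proposal is correct and is essentially the paper's own proof: take $u\in\mathcal{SH}_m(X_2)$, use Theorem~\ref{cont} part $(b)$ to get a decreasing sequence $u_j\in\mathcal{SH}_m^o(X_2)$, note that each $u_j$ restricts to a member of $\mathcal{SH}_m^o(X_1)$, and apply Theorem~\ref{cont} part $(b)$ again on $X_1$. You have merely spelled out the restriction observation that the paper leaves implicit.
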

\begin{proof}
To see this take $u\in \mathcal{SH}_m(X_2)$, then by Theorem~\ref{cont} part $(b)$  there exists a sequence $u_j\in \mathcal{SH}_m^o(X_2)$ decreasing to $u$. Since $u_j$ belongs also to $\mathcal{SH}_m^o(X_1)$ then $u\in \mathcal{SH}_m(X_1)$.
\end{proof}

In Theorem~\ref{localization_Pm}, we shall need the following localization theorem. The case $m=n$ is Gauthier's localization theorem from~\cite{Gauthier}. For the proof of the following theorem, and later sections we need to recall the following definition. A function
$u$ is said to be \emph{strictly $m$-subharmonic} on $\Omega$ if for every $p \in \Omega$ there exists a constant $c_p >0$ such that $u(z)-c_p |z| ^2$ is $m$-subharmonic in a neighborhood of $p$.

\begin{theorem}\label{localization}
If $X \subset \C^n$ is a compact set, then $u \in \mathcal{SH}_m(X)\cap \mathcal{C}(X)$ if, and only if, for each $z \in X$, there is a neighborhood
$B_z$ such that $u |_{X\cap \bar B_z}\in \mathcal{SH}_m(X\cap \bar B_z)\cap \mathcal{C}(X\cap \bar B_z)$.
\end{theorem}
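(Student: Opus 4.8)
The plan is to prove the two implications separately; the forward implication is immediate, and the reverse one carries the whole content of the theorem.

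For the direction ``global $\Rightarrow$ local'', fix any $z\in X$ and choose any closed ball $\bar B_z$. Then $X\cap\bar B_z$ is a compact subset of $X$, so Corollary~\ref{cor} gives $\mathcal{SH}_m(X)\subset\mathcal{SH}_m(X\cap\bar B_z)$, whence $u|_{X\cap\bar B_z}\in\mathcal{SH}_m(X\cap\bar B_z)$; continuity is inherited by restriction. Thus the local condition holds for every $B_z$, which is stronger than what is asked.

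For ``local $\Rightarrow$ global'', note first that $u$ is continuous on $X$, being continuous on each member of a finite cover by the sets $X\cap\bar B_z$. By Theorem~\ref{cont} part $(a)$ it then suffices to produce, for every $\varepsilon>0$, a single $w\in\mathcal{SH}_m^o(X)$ with $u-\varepsilon\le w\le u$ on $X$: forming $g_n=\max(w_{1},\dots,w_{n})$ for a sequence $w_j$ with $u-1/j\le w_j\le u$ gives an increasing sequence $g_n\nearrow u$ in $\mathcal{SH}_m^o(X)$ (the maximum of finitely many elements of $\mathcal{SH}_m^o(X)$ again lies in $\mathcal{SH}_m^o(X)$), and Theorem~\ref{cont}$(a)$ concludes. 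To construct $w$, cover the compact set $X$ by finitely many open balls $B_1,\dots,B_N$ centred at points of $X$, so that the hypothesis applies on each $K_i:=X\cap\bar B_i$. By Theorem~\ref{cont}$(a)$, together with the uniform convergence noted in the remark, each $u|_{K_i}$ is approximated by some $\varphi_i\in\mathcal{SH}_m^o(K_i)$, i.e. a function $m$-subharmonic and continuous on an open neighbourhood $\Omega_i\supset K_i$ in $\mathbb C^n$, with $u-\varepsilon<\varphi_i\le u$ on $K_i$; perturbing by a small strictly $m$-subharmonic term (a multiple of $|z|^2$, suitably normalized to keep $\varphi_i\le u$) we may take each $\varphi_i$ strictly $m$-subharmonic on $\Omega_i$.

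The function $w$ is then assembled by pasting the $\varphi_i$ together. Shrinking the cover to balls $V_i\Subset B_i$ still covering $X$, I would incorporate the pieces one at a time using the standard pasting property of $m$-subharmonic functions on open sets: if $v$ is $m$-subharmonic on $\Omega$, $w'$ is $m$-subharmonic on an open $G\subset\Omega$, and $\limsup_{G\ni x\to\xi}w'(x)\le v(\xi)$ for every $\xi\in\Omega\cap\partial G$, then the function equal to $\max(v,w')$ on $G$ and to $v$ on $\Omega\setminus G$ is $m$-subharmonic on $\Omega$. At the $(k{+}1)$-st step the function $\Phi_k$ already built, $m$-subharmonic on a neighbourhood of $X\cap\overline{B_1\cup\dots\cup B_k}$ with $u-\varepsilon\le\Phi_k\le u$, is combined with $\varphi_{k+1}$ by taking the maximum on the overlap and keeping the dominant piece across the seam; strict $m$-subharmonicity supplies the strict boundary inequalities the pasting property needs, and since all pieces are $\le u$ on $X$ one retains $u-\varepsilon\le\Phi_{k+1}\le u$ there.

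The essential difficulty, and the real content of the theorem, is exactly this gluing. The local approximants live on different neighbourhoods $\Omega_i$ of the sets $K_i$, and there is \emph{no} $m$-subharmonic function that is close to $u$ on $K_i$ yet very negative on the whole sphere $\partial B_i$ (the maximum principle forbids it), so a piece cannot be extended across a full ball by a constant background and then maximised. One must instead choose the finite cover, the shrunken balls $V_i\Subset B_i$, and the neighbourhoods $\Omega_i$ so that every seam of the pasting lies, on $X$, inside some other ball $V_j$ where the corresponding approximant exceeds $u-\varepsilon$ and hence dominates, and so that the pasted function is $m$-subharmonic on an honest full neighbourhood of $X$ rather than merely on $X$. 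This bookkeeping is the $m$-subharmonic analogue of Gauthier's localization argument~\cite{Gauthier} and is where the work lies; once it is carried through one obtains $w\in\mathcal{SH}_m^o(X)$ with $u-\varepsilon\le w\le u$, and the reduction above finishes the proof.
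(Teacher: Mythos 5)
Your easy direction and the reduction are fine: Corollary~\ref{cor} gives ``global $\Rightarrow$ local'', and your reduction of the converse to producing, for each $\varepsilon>0$, one $w\in\mathcal{SH}_m^o(X)$ with $u-\varepsilon\le w\le u$ (then taking maxima and invoking Theorem~\ref{cont}$(a)$) is exactly the step the paper leaves implicit. But the proof has a genuine gap, and you have located it yourself: the entire content of the theorem is the gluing of the local approximants $\varphi_i$, and your proposal explicitly defers it (``this bookkeeping \dots is where the work lies'') rather than carrying it out. Worse, the mechanism you gesture at would fail. At a seam point $\xi\in X\cap\partial B_i$ covered by another ball $B_j$, you know $\varphi_j(\xi)>u(\xi)-\varepsilon$ and $\varphi_i(\xi)\le u(\xi)$; this does \emph{not} give $\varphi_i(\xi)\le\varphi_j(\xi)$, since the outgoing piece $\varphi_i$ may sit at height $u(\xi)$ while the incoming piece $\varphi_j$ sits at $u(\xi)-\varepsilon$. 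Nor does strict $m$-subharmonicity of the pieces ``supply the strict boundary inequalities'': strictness is a property of the Hessian, not of boundary values, and it creates no ordering between two unrelated approximants on a seam. So the pasting lemma's hypothesis $\limsup \varphi_i\le\varphi_j$ on the seam is simply not available from what you have arranged.

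What the paper does instead is to \emph{tilt} each local piece before taking the maximum. One chooses compacta $K_{j,k}\subset B_k$ with $\partial B_j\cap X\subset\bigcup_{k\ne j}K_{j,k}$, sets $K_k=\bigcup_j K_{j,k}$, and takes smooth cutoffs $\chi_k$ with $\chi_k=0$ near $K_k$, $\chi_k=-1$ off $B_k$, $-1\le\chi_k\le 0$. The perturbation $\eta_k\chi_k+c|z|^2$ is made $m$-subharmonic near $\bar B_k$ by taking $\eta_k$ small relative to $c$ (note the role of $c|z|^2$ here: it compensates the cutoff $\chi_k$, which is not $m$-subharmonic --- it is not there to make the $\varphi_i$ strictly $m$-subharmonic, and your strictness perturbation is unnecessary). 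One then insists on the hierarchy $2\varepsilon_j<\eta_j$ between the accuracy $|u-u_j|<\varepsilon_j$ of the local approximants and the tilt sizes, defines $f_j=u_j+\eta_j\chi_j+c|z|^2$ on $(U_j\setminus X)\cup(X\cap\bar B_j)$ and $f_j=-\infty$ elsewhere, and sets $v=\max_j f_j$. Now at $z\in\partial B_j\cap X$ one has $z\in K_k\subset B_k$ for some $k\ne j$, where $\chi_j(z)=-1$ and $\chi_k(z)=0$, so $f_j(z)-f_k(z)=\bigl(u_j(z)-u_k(z)\bigr)-\eta_j\le\varepsilon_j+\varepsilon_k-\eta_j<0$: the tilt, not the closeness to $u$, forces the outgoing piece strictly below an interior piece, so $f_j$ drops out of the maximum locally and $v$ is, near every point of $X$, a maximum of finitely many functions $m$-subharmonic on a full neighborhood; the domain restriction of $f_j$ handles the seams away from $X$, where no comparison with $u$ is possible. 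The price is the error $\max_j\eta_j+\varepsilon_j+c\sup|z|^2$ in the uniform estimate, absorbed by choosing $c$, then $\eta_j$, then $\varepsilon_j$ small in that order. Your maximum-principle remark correctly explains why a constant background cannot work, but without this cutoff construction (Gauthier's device from~\cite{Gauthier}) your induction on pasted pieces does not close.
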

\begin{proof} This proof is inspired by~\cite{Gauthier}. First we see that the restriction of a function $u \in \mathcal{SH}_m(X)\cap \mathcal C(X)$ to $X \cap \bar B$ is $m$-subharmonic on that set. This follows from Corollary~\ref{cor}. Now we show the converse statement. Since $X$ is compact there exists a finite open covering $\{B_j\}$ of $X$. Assume that $u|_{X \cap \bar B_j} \in \mathcal{SH}_m(X \cap \bar B_j)\cap \mathcal C(X \cap \bar B_j)$ for all $j$. For every $j$, we can find compact sets $K_{j,k}$ such that $K_{j,k} \subset B_k$ and
\[
\partial B_j \cap X \subset \bigcup_{k \neq j}K_{j,k}\, .
\]
Let $K_k = \bigcup_j K_{j,k}$,  and note that $K_k \subset B_k$. Set
\[
d_k = \mbox{dist}(K_k, \partial B_k)\, .
\]
For every $k$ there exists a function $\chi_k$ that is smooth on $\C^n$, $-1 \leq \chi_k \leq 0$, $\chi_k(z)=0$ when $\mbox{dist}(z, K_k) \leq \frac{d_k}{2}$, and $\chi_k=-1$ outside of $B_k$. Choose an arbitrary constant $c > 0$. The function $|z|^2$ is strictly $m$-subharmonic, so there exists a constant $\eta^0_k>0$ such that for every $0 < \eta_k<\eta_k^0$, the function $\eta_k \chi_k +c|z|^2$ is $m$-subharmonic and continuous on an open set $V_k$, $B_k \Subset V_k$. Choose a sequence $\{\varepsilon_j\}$ of positive numbers such that
\begin{equation}\label{eq:etaepsilon}
2 \max_{z \in \bar B_j} \varepsilon_j < \min_{z \in \bar B_j} \eta_j\, ,
\end{equation}
for every $z \in X$. The reason for this will be clear later. By the  assumption that $u|_{X \cap \bar B_j} \in \mathcal{SH}_m(X \cap \bar B_j)\cap \mathcal C(X \cap \bar B_j)$ for every $j$, Theorem~\ref{cont} part (b) says that there exist open sets  $U_j$, $(X \cap B_j) \Subset U_j \Subset V_j$ and functions $u_j \in \mathcal{SH}_m(U_j)\cap \mathcal C(U_j)$ such that
\begin{equation} \label{eq:epsilon_j}
|u-u_j|<\varepsilon_j \ \text{on} \ X \cap \bar B_j\, .
\end{equation}
For $z \in (U_j \setminus X)\cup (X\cap \bar B_j)$ set
\[
f_j(z)=u_j(z) + \eta_j \chi_j(z) + c|z|^2\, ,
\]
and elsewhere set $f_j=-\infty.$ Now define the function
\[
v(z)=\max_j f_j(z)\, .
\]
It remains to show that $v$ approximates $u$ uniformly on $X$, and that $v \in \mathcal{SH}_m^o(X)$. For $z \in X$ we have
\begin{multline}\label{exp}
|u(z)-v(z)| = |u(z)-\max_j f_j(z)|\\
= \left|u(z)-\max_{z \in X\cap \bar B_j}(u_j(z)+\eta_j \chi_j + c|z|^2)\right|\\
\leq \max_{z \in X\cap \bar B_j} \eta_j + |u(z)-\max_{z \in X\cap \bar B_j} u_j(z)| + c|z|^2\, .
\end{multline}
By choosing the constants $c, \eta_j, \varepsilon_j$ in the right order and small enough, then the right-hand side of~(\ref{exp}) can be made arbitrary small. Hence, $v$ approximates $u$ uniformly on $X$.

To prove that $v \in \mathcal{SH}_m^o(X)$, first take $z \in X$ that does not lie on the boundary of any $B_j$. The functions $f_k$, that are not $- \infty$ at $z$, are finitely many and they are continuous and $m$-subharmonic in a neighborhood of $z$. If $z \in \partial B_j \cap X$, then there exists a $k$ such that $z \in (X \cap K_k) \subset (X \cap B_k)$. For this $j$ and $k$ we have
\begin{multline*}
f_j(z)=u_j(z)+\eta_j \chi_j + c|z|^2
= u_j(z) - \eta_j +c|z|^2\\
=\bigl(u_j(z)-u_k(z)\bigr)+\bigl(u_k(z)+\eta_k 0+c|z|^2\bigr) - \eta_j\\
=f_k(z)+\bigl(u_j(z)-u_k(z)\bigr)-\eta_j \leq f_k(z)
\end{multline*}
where the last inequality follows from assumption (\ref{eq:etaepsilon}) together with (\ref{eq:epsilon_j}) (that makes sure that $|u_j(z)-u_k(z)|<\varepsilon_j+ \varepsilon_k$). This means that locally, near $z$, we can assume that the function $v$ is the maximum of functions $f_k$, $k \neq j$, where the functions $f_k$ are continuous and $m$-subharmonic in a neighborhood of $z$. This concludes the proof.
\end{proof}

As an immediate consequence we get the following gluing theorem for $m$-sub\-har\-mo\-nic functions on compact sets.

\begin{corollary}
Let $\omega \Subset \Omega$ be open sets, let $u\in \mathcal{SH}_m(\bar \omega)\cap \mathcal C(\bar \omega)$, $v\in \mathcal{SH}_m(\bar\Omega)\cap\mathcal C(\bar\Omega)$ and $u(z)\leq v(z)$ for $z\in \partial \omega$. Then the function
\[
\varphi=\begin{cases}
v, \, \text { on } \, \bar\Omega\setminus \omega,\\
\max\{u,v\}, \; \text { on } \, \omega,
\end{cases}
\]
belongs to $\mathcal{SH}_m(\bar \Omega)\cap\mathcal C(\bar \Omega)$.
\end{corollary}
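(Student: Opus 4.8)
The plan is to establish the two claims — continuity and membership in $\mathcal{SH}_m(\bar\Omega)$ — separately, deducing the second from the localization theorem (Theorem~\ref{localization}). Continuity of $\varphi$ is the easy part. The two defining pieces are continuous on the closed sets $\bar\Omega\setminus\omega$ and $\bar\omega$, and they agree on the common boundary $\partial\omega$: there the hypothesis $u\leq v$ forces $\max\{u,v\}=v$, so approaching any $\xi\in\partial\omega$ from inside $\omega$ gives $\max\{u,v\}\to v(\xi)$ and from outside gives $v\to v(\xi)$. A pasting argument then yields $\varphi\in\mathcal C(\bar\Omega)$.

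For membership, by Theorem~\ref{localization} it suffices to produce, for each $z\in\bar\Omega$, a ball $B_z$ with $\varphi|_{\bar\Omega\cap\bar B_z}\in\mathcal{SH}_m(\bar\Omega\cap\bar B_z)\cap\mathcal C(\bar\Omega\cap\bar B_z)$. Away from $\partial\omega$ this is routine. If $\bar B_z\subset\omega$, then $\varphi=\max\{u,v\}$ there; the restrictions of $u$ and $v$ lie in $\mathcal{SH}_m(\bar B_z)$ by Corollary~\ref{cor}, so Theorem~\ref{thm_basicprop2}(2) gives the claim. If $\bar B_z\cap\bar\omega=\emptyset$, then $\varphi=v$ and Corollary~\ref{cor} applies directly.

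The only real work is at a point $z\in\partial\omega$, where I would choose $\bar B_z\subset\Omega$ so that $\bar\Omega\cap\bar B_z=\bar B_z$, and then exhibit a sequence in $\mathcal{SH}_m^o(\bar B_z)$ decreasing to $\varphi$, so that Theorem~\ref{cont}(b) applies. Applying Theorem~\ref{cont}(b) on $\bar\omega$ and on $\bar\Omega$ gives $u_j\searrow u$ and $v_k\searrow v$ by functions that are $m$-subharmonic and continuous on neighborhoods; after replacing $v_k$ by $v_k+1/k$ I may assume the $v_k$ strictly decrease to $v$ with $v_k>v\geq u$ on $\bar\Omega$. Then $u<v_k$ holds strictly on the compact set $\partial\omega$, so by Dini's theorem (the convergence $u_j\searrow u$ is uniform there) I can select an increasing sequence $j_k$ with $u_{j_k}<v_k$ on a neighborhood of $\partial\omega$. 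Setting $\Phi_k=\max\{u_{j_k},v_k\}$ on $\omega$ and $\Phi_k=v_k$ off $\omega$, the strict inequality makes the two formulas coincide on that neighborhood of $\partial\omega$, so $\Phi_k$ is $m$-subharmonic and continuous on a neighborhood of $\bar B_z$, i.e.\ $\Phi_k\in\mathcal{SH}_m^o(\bar B_z)$; monotonicity of both approximating sequences gives $\Phi_k\searrow\varphi$ on $\bar B_z$. Theorem~\ref{cont}(b) then yields $\varphi|_{\bar B_z}\in\mathcal{SH}_m(\bar B_z)$, and Theorem~\ref{localization} assembles all the local pieces into $\varphi\in\mathcal{SH}_m(\bar\Omega)\cap\mathcal C(\bar\Omega)$.

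The step I expect to be the main obstacle is exactly this boundary construction, and within it the points where $u(z)=v(z)$. There one cannot simply assert $\varphi=v$ near $z$, and the naive remedy of extending $u$ by $-\infty$ outside $\bar\omega$ fails: such an extension is not $m$-subharmonic on $\bar B_z$, since Jensen measures of the larger set may place mass outside $\omega$ and drive the integral to $-\infty$. The device that unlocks the argument is the upward perturbation $v_k\mapsto v_k+1/k$, which converts the non-strict boundary inequality $u\leq v$ into strict domination of the $u$-approximants by the $v$-approximants; Dini's theorem and the resulting local coincidence of the two defining formulas then make the gluing automatic and, crucially, keep the approximants inside $\mathcal{SH}_m^o(\bar B_z)$.
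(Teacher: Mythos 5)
Your proof is correct, and it isolates exactly the device the paper uses — the upward perturbation of $v$ that converts the non-strict inequality $u\leq v$ on $\partial\omega$ into local coincidence of the two defining formulas near $\partial\omega$ — but you implement it one level down, at the level of $\mathcal{SH}_m^o$-approximants. The paper's proof is shorter: it perturbs the glued function itself, setting $\varphi_\varepsilon=v+\varepsilon$ on $\bar\Omega\setminus\omega$ and $\max\{u,v+\varepsilon\}$ on $\omega$, gets $\varphi_\varepsilon\in\mathcal{SH}_m(\bar\Omega)\cap\mathcal C(\bar\Omega)$ directly from Theorem~\ref{localization} (near $\partial\omega$ one has $\varphi_\varepsilon=v+\varepsilon$ by continuity, and the other local pieces come from Corollary~\ref{cor} and Theorem~\ref{thm_basicprop2}), and then lets $\varepsilon\searrow 0$, invoking stability under decreasing limits (Theorem~\ref{thm_basicprop2}, part (3)) together with the uniform bound $\varphi\leq\varphi_\varepsilon\leq\varphi+\varepsilon$ for continuity. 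You instead invoke Theorem~\ref{cont} part (b) twice and synchronize the two approximating sequences with Dini's theorem to get $u_{j_k}<v_k$ on a neighborhood of $\partial\omega$; this works, and in fact your glued functions $\Phi_k$ are $m$-subharmonic and continuous on a whole neighborhood of $\bar\Omega$, so they already form a decreasing sequence in $\mathcal{SH}_m^o(\bar\Omega)$ converging to $\varphi$, and Theorem~\ref{cont} part (b) concludes globally — the localization at balls $B_z$, and hence Theorem~\ref{localization} itself, is redundant in your argument. What the paper's version buys is economy (no Dini, no approximating sequences, no gluing on open sets); what yours buys is an explicit global decreasing sequence in $\mathcal{SH}_m^o(\bar\Omega)$, which reproves continuity along the way. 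One slip to correct: the assertion ``$v_k>v\geq u$ on $\bar\Omega$'' is wrong as stated, since $u$ is only defined on $\bar\omega$ and $u\leq v$ is only hypothesized on $\partial\omega$; but your Dini step uses only $u<v_k$ on $\partial\omega$, where the inequality does hold with margin $1/k$ after the perturbation, so the error is cosmetic rather than a gap.
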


\begin{proof}
Let $\varepsilon >0$ and define
\[
\varphi_{\varepsilon}=\begin{cases}
v+\varepsilon, \, \text { on } \, \bar\Omega\setminus \omega,\\
\max\{u,v+\varepsilon\}, \; \text { on } \, \omega.
\end{cases}
\]
Then by Theorem~\ref{localization} we get that $\varphi_{\varepsilon}\in \mathcal{SH}_m(\bar \Omega)\cap\mathcal C(\bar \Omega)$ and $\varphi_{\varepsilon}\searrow \varphi$, as $\varepsilon \to 0$. By Theorem~\ref{thm_basicprop2} we conclude that $\varphi\in \mathcal{SH}_m(\bar \Omega)\cap\mathcal C(\bar \Omega)$.
\end{proof}

Let us now look at a bounded domain $\Omega$ in $\mathbb C^n$. We want to investigate what the connection is between $\mathcal{SH}_m(\bar \Omega)$ and $\mathcal{SH}_m(\Omega)$. It is easy to show that $\mathcal{SH}_m(\bar \Omega) \subset \mathcal{SH}_m(\Omega)$. Using Definition \ref{def_msubkomp} we know that a function $\varphi \in \mathcal{SH}_m(\Omega)\cap \mathcal{USC}(\bar \Omega)$ is in $\mathcal{SH}_m(\bar \Omega)$ if $\varphi(z) \leq \int \varphi \, d\mu$ for all $\mu \in  \mathcal{J}_z^m(\bar \Omega)$ where $z \in \bar \Omega$. In the same way as in \cite{HP} we can show that it is enough to look at the measures in $\mathcal{J}_z^m(\bar \Omega)$ for $z \in \partial \Omega$.

\begin{theorem} \label{thm_pmboundary}
  Let $\Omega$ be a bounded open set in $\C^n$, and $1 \leq m \leq n$.

\medskip

\begin{enumerate}\itemsep2mm
  \item[$(1)$] If $\varphi \in \mathcal{SH}_m(\bar \Omega)$,  then $\varphi \in \mathcal{SH}_m(\Omega)$ and $\varphi \in \mathcal{SH}_m(\partial\Omega)$.

  \item[$(2)$] If $\varphi \in \mathcal{SH}_m(\Omega)\cap \mathcal{USC}(\partial \Omega)$, and
  \[
  \varphi(z) \leq \int \varphi \, d\mu\, , \  \text { for all } \ z \in \partial\Omega \  \text { and all }\  \mu \in \mathcal{J}_z^m(\bar \Omega)\, ,
  \]
   then $\varphi \in \mathcal{SH}_m(\bar \Omega).$
\end{enumerate}
\end{theorem}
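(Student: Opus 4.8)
\emph{Part $(1)$} should be a quick consequence of what precedes. Given $\varphi\in\mathcal{SH}_m(\bar\Omega)$, the inclusion $\partial\Omega\subset\bar\Omega$ of compact sets together with Corollary~\ref{cor} immediately gives $\varphi\in\mathcal{SH}_m(\partial\Omega)$. To obtain $\varphi\in\mathcal{SH}_m(\Omega)$ in the classical open-set sense, I would invoke Theorem~\ref{cont} part $(b)$ to write $\varphi$ as the limit of a decreasing sequence $u_j\in\mathcal{SH}_m^o(\bar\Omega)$. Each $u_j$ is $m$-subharmonic on a neighbourhood of $\bar\Omega$, hence on $\Omega$, and a decreasing limit of classical $m$-subharmonic functions is classical $m$-subharmonic (the limit is not identically $-\infty$ because $\varphi$ is a genuine upper semicontinuous function). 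This is the whole of part $(1)$.

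\emph{Part $(2)$} is the substantial direction, and I would follow the pattern of~\cite{HP}. Fix $z_0\in\Omega$ and $\mu\in\mathcal{J}_{z_0}^m(\bar\Omega)$; the boundary case $z_0\in\partial\Omega$ is exactly the hypothesis, so only interior points remain. Since $\varphi$ is upper semicontinuous on the compact set $\bar\Omega$, choose continuous functions $g_k\searrow\varphi$ on $\bar\Omega$. For each $k$ form the envelope
\[
h_k(z)=\sup\{\psi(z):\psi\in\mathcal{SH}_m^o(\bar\Omega),\ \psi\le g_k\}.
\]
By Theorem~\ref{thm_edwards} part $(a)$ one has $h_k(z)=\inf\{\int g_k\,d\nu:\nu\in\mathcal{J}_z^m(\bar\Omega)\}$, and combining the two expressions (lower semicontinuity from the supremum side, upper semicontinuity from the infimum side via Theorem~\ref{thm_convmeasures}) shows $h_k$ is continuous; as in the proof of Theorem~\ref{cont} part $(a)$ it then lies in $\mathcal{SH}_m(\bar\Omega)\cap\mathcal{C}(\bar\Omega)$, with $h_k\le g_k$. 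On the boundary the hypothesis gives, for $z\in\partial\Omega$ and any $\nu\in\mathcal{J}_z^m(\bar\Omega)$, that $\varphi(z)\le\int\varphi\,d\nu\le\int g_k\,d\nu$; taking the infimum over $\nu$ yields $\varphi\le h_k$ on $\partial\Omega$.

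The heart of the matter is to propagate $\varphi\le h_k$ from $\partial\Omega$ into $\Omega$. Here I would use that $h_k$, being the largest $m$-subharmonic minorant of the obstacle $g_k$, is maximal on the open set $G_k=\{z\in\Omega:h_k(z)<g_k(z)\}$, while $h_k=g_k\ge\varphi$ on $\Omega\setminus G_k$. On $\partial G_k$ the values satisfy $\varphi\le h_k$ (on $\partial G_k\cap\partial\Omega$ by the previous step, on $\partial G_k\cap\Omega$ because there $h_k=g_k\ge\varphi$), so the comparison principle for $m$-subharmonic functions forces $\varphi\le h_k$ throughout $G_k$, hence on all of $\bar\Omega$. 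Finally, since $h_k\in\mathcal{SH}_m(\bar\Omega)$ and $\mu\in\mathcal{J}_{z_0}^m(\bar\Omega)$, Definition~\ref{def_msubkomp} gives
\[
\varphi(z_0)\le h_k(z_0)\le\int h_k\,d\mu\le\int g_k\,d\mu,
\]
and letting $k\to\infty$ (monotone convergence, $g_k\searrow\varphi$) produces $\varphi(z_0)\le\int\varphi\,d\mu$, as required.

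I expect the interior step to be the main obstacle. The Jensen-measure machinery alone seems to run in a circle: any attempt to verify the inequality at an interior $z_0$ directly, or to glue $\max(\varphi,h_k)$ into $\mathcal{SH}_m(\bar\Omega)$ and then conclude by maximality through Theorem~\ref{thm_edwards} part $(b)$, ultimately reduces to the very interior Jensen inequality one is trying to prove. What breaks the circle is an input from the classical theory of $m$-subharmonic functions (see~\cite{SA}): the maximality of the envelope $h_k$ off its contact set and the comparison principle. Some care is also needed because $\varphi$ is only upper semicontinuous and may equal $-\infty$, so the comparison on $\partial G_k$ must be read as a $\limsup$ inequality, which is precisely what upper semicontinuity of $\varphi$ together with continuity of $h_k$ supplies.
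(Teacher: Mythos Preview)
Your Part~$(1)$ is fine and coincides with the paper's argument.

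Your Part~$(2)$, however, takes a route different from the paper's and contains two genuine gaps.

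\emph{Continuity of $h_k$.} The infimum representation does \emph{not} yield upper semicontinuity. Theorem~\ref{thm_convmeasures} says that weak-$^\ast$ limits of measures in $\mathcal{J}_{z_j}^m(\bar\Omega)$ lie in $\mathcal{J}_z^m(\bar\Omega)$; applied to a sequence of minimizers this gives $\liminf_{j}h_k(z_j)\ge h_k(z)$, i.e.\ lower semicontinuity again---the same direction you already have from the supremum side. Nothing here forces $\limsup_{j}h_k(z_j)\le h_k(z)$.

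\emph{Maximality of $h_k$ on $G_k$.} Even granting continuity, the maximality you invoke is for the \emph{wrong} envelope. Your $h_k$ is the supremum over $\mathcal{SH}_m^o(\bar\Omega)$ (equivalently, by Theorem~\ref{thm_edwards}~(b), over $\mathcal{SH}_m(\bar\Omega)$), so it is the largest $\mathcal{SH}_m(\bar\Omega)$--minorant of $g_k$ and is ``maximal'' only against competitors from that class. The comparison you need is with $\varphi\in\mathcal{SH}_m(\Omega)$, which requires maximality of $h_k$ in the \emph{classical} sense on $G_k$; the standard balayage proof of that (lift a competitor on a ball $B\Subset G_k$ and reinsert it as a competitor for the envelope) breaks down here because the lifted function need not lie in $\mathcal{SH}_m(\bar\Omega)$. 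If instead you try the direct route---set $w=\max(\varphi,h_k)$, note $w\in\mathcal{SH}_m(\Omega)$, $w\le g_k$, $w=h_k$ on $\partial\Omega$, and conclude $w\le h_k$---you must first know $w\in\mathcal{SH}_m(\bar\Omega)$, which is precisely an instance of Part~$(2)$. So the circularity you anticipated is not broken by the maximality/comparison input you cite.

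The paper avoids this entirely. It does not try to verify the interior Jensen inequality; instead it builds a decreasing sequence in $\mathcal{SH}_m^o(\bar\Omega)$ converging to $\varphi$ and appeals to Theorem~\ref{cont}~(b). Concretely: take $u_j\in\mathcal{C}(\bar\Omega)$ with $u_j\searrow\varphi$, and for each $j$ use Edwards together with the boundary hypothesis to produce $v_j\in\mathcal{SH}_m^o(\bar\Omega)$ with $v_j\le u_j$ on $\bar\Omega$ and $\varphi<v_j$ on $\partial\Omega$. The strict inequality propagates to a neighbourhood of $\partial\Omega$, so
\[
\varphi_j(z)=\begin{cases}\max\{\varphi(z),v_j(z)\},&z\in\bar\Omega,\\ v_j(z),&\text{otherwise},\end{cases}
\]
is $m$-subharmonic on a neighbourhood of $\bar\Omega$ (it equals $v_j$ near and outside $\partial\Omega$, and $\max\{\varphi,v_j\}$ inside), and $\varphi_j\searrow\varphi$ on $\bar\Omega$. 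This gluing is the ``classical input'' that actually breaks the circle: it uses only that $\varphi\in\mathcal{SH}_m(\Omega)$ and that $v_j$ is $m$-subharmonic on a neighbourhood, not any maximality or comparison statement.
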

\begin{proof}
\emph{Part} $(1):$ By Theorem~\ref{cont} part $(b)$ there exists a sequence $\varphi_j\in \mathcal{SH}_m^o(\bar \Omega)$ decreasing to $\varphi$.
Then $\varphi_j\in \mathcal{SH}_m(\Omega)$, so $\varphi \in \mathcal{SH}_m(\Omega)$. The fact that
$\varphi \in \mathcal{SH}_m(\partial\Omega)$ follows from Corollary~\ref{cor}.

\bigskip

\emph{Part} $(2):$ By Theorem \ref{cont} part $(b)$ we want to prove that there is a decreasing sequence of functions
$\varphi_j$ in $\mathcal{SH}_m^o(\bar \Omega)$ such that $\varphi_j \rightarrow \varphi$ on $\bar \Omega$. Since $\varphi$
is upper semicontinuous we can find $\{u_j\} \subset \mathcal{C}(\bar \Omega)$ such that $u_j \searrow \varphi$ on $\bar \Omega$.
We are going to show that we can find functions $\{v_j\} \in \mathcal{SH}_m^o(\bar \Omega)$ such that $v_j \leq u_j$ and $v_j(z) \searrow \varphi(z)$
for every $z \in \partial \Omega$. From this it will follow that the functions
\[
\varphi_j=\begin{cases}
\max\{\varphi(z),v_j(z)\} & \text{if} \ z \in \bar \Omega\\
v_j(z) & \text{otherwise}\\
\end{cases}
\]
will belong to $\mathcal{SH}_m^o(\bar \Omega)$, and $\varphi_j \searrow \varphi$ on $\bar \Omega$.

To construct the approximating sequence $\{v_j\}$ define first
\[
F_j(z):=\sup\left \{v(z): v \in \mathcal{SH}_m^o(\bar \Omega), v \leq u_j\right \}=\inf\left \{\int u_j \, d\mu: \mu \in \mathcal{J}_z^m(\bar \Omega)\right\}.
\]
Since $\mathcal{J}_z^m(\bar \Omega)$ is compact in the weak$^\ast$-topology we can, for all $z\in \bar \Omega$ find $\mu_z \in \mathcal{J}_z^m(\bar \Omega)$ such that $F_j(z)=\int u_j \, d\mu_z$. We know, by the construction of $F_j$, that $F_j \leq u_j$, and
\[
F_j(z)=\int u_j \, d\mu_z > \int \varphi \, d\mu_z \geq \varphi(z) \ \text{ for all }z \in \partial \Omega\, .
\]
By the construction of $F_j$ we know that for every $z\in \partial \Omega$ we can find $v_z \in \mathcal{SH}_m^o(\bar \Omega)$ such that $v_z \leq F_j$ and $\varphi(z)< v_z(z)\leq F_j(z)$. The function $\varphi - v_z$ is upper semicontinuous and therefore the set
\[
U_z=\{w \in \partial \Omega: \varphi(w)-v_z(w)<0\}
\]
is open in $\partial \Omega$. It now follows from the compactness of $\partial \Omega$ that there are finitely many points $z_1,\ldots,z_k$ with corresponding functions $v_{z_1},\ldots,v_{z_k}$ and open sets $U_{z_1},\ldots,U_{z_k}$ such that $\varphi < v_{z_j}$ in $U_{z_j}$ and $\partial \Omega=\cup_{j=1}^kU_{z_j}$. The function $v_j=\max\{v_{z_1},\ldots,v_{z_k}\}$ belongs to $\mathcal{SH}_m^o(\bar \Omega)$ and $\varphi(z)< v_j(z) \leq u_j(z)$ for $z \in \partial \Omega$. This completes the proof.

\end{proof}

\section{$P_m$-hyperconvex domains}\label{sec Pmhxdomains}

Assume that $\Omega\subset\C^n$ is a bounded open set, and let  $1 \leq m \leq n$. Theorem~\ref{cont}
give rise to the question of how to decide if $u$ is in  $\mathcal{SH}_m(\bar \Omega)$.
From Theorem~\ref{thm_pmboundary} it follows that if $u\in\mathcal{SH}_m(\bar \Omega)$, then
$u\in \mathcal{SH}_m(\Omega)$, and $u\in\mathcal{SH}_m(\partial\Omega)$. The converse statement is not true, not even
under the assumption that $\Omega$ is $m$-hyperconvex (see Definition~\ref{def_mhx}). But if we assume that $\Omega$ admits a negative exhaustion function in $\mathcal{SH}_m(\bar \Omega)$ (notice here that $\bar\Omega$ is a compact set), then we shall prove
in Theorem~\ref{cor_msubbdvalue} that
\[
u\in\mathcal{SH}_m(\bar \Omega) \quad\Leftrightarrow\quad u\in\mathcal{SH}_m(\Omega) \text{ and } u\in\mathcal{SH}_m(\partial\Omega)\, .
\]

First we shall recall the definition of a $m$-hyperconvex domain.

\begin{definition}\label{def_mhx}
  Let $\Omega$ be a domain in $\C^n$, and $1 \leq m \leq n$. We say that $\Omega$ is \emph{$m$-hyperconvex} if it admits an exhaustion function that is negative and in $\mathcal{SH}_m(\Omega)$.
\end{definition}

Let us now make a formal definition of $P_m$-hyperconvex domains.

\begin{definition}\label{def_Pmhx}
  Let $\Omega$ be a domain in $\C^n$, and let $1 \leq m \leq n$. We say that $\Omega$ is \emph{$P_m$-hyperconvex} if it admits an exhaustion function that is negative, and in $\mathcal{SH}_m(\bar \Omega)$.
\end{definition}

From Theorem~\ref{thm_pmboundary} it follows that a $P_m$-hyperconvex domain is also $m$-hyper\-con\-vex. The
converse is not true. The case $m=n$ was studied in~\cite{HP}, and discussed in~\cite{PW}. A $P_n$-hyperconvex domain is $P_m$-hyperconvex for every $m=1,\ldots,n$, and as observed in~\cite{HP}, the notion of $P_n$-hyperconvexity is strictly weaker than the notion of \emph{strict hyperconvexity} that has been studied and used by
for example Bremermann~\cite{bremermann}, and Poletsky~\cite{PO3}. Furthermore, a $P_m$-hyperconvex domain is fat in the sense $\Omega=(\bar{\Omega})^{\circ}$.

 It is straight forward to see that if $\Omega_1$ and $\Omega_2$ are $P_m$-hyperconvex domains in $\C^n$, then
 $\Omega_1 \cap \Omega_2$ is $P_m$-hyperconvex in $\C^n$, and $\Omega_1 \times \Omega_2$ is $P_m$-hyperconvex in $\mathbb C^{2n}$.

\bigskip

 As in the case of $m$-hyperconvex domains, we have in Theorem~\ref{thm_charPmhx} several nice characterizations of $P_m$-hyperconvex domains in terms of the barrier functions, and Jensen measures. The property that a domain is (globally) $P_m$-hyperconvex if, and only if, it is locally $P_m$-hyperconvex we leave to Theorem~\ref{localization_Pm}.

\begin{theorem}\label{thm_charPmhx}
Let $\Omega$ be a bounded domain in $\C^n$. Then the following assertions are equivalent:

\medskip
\begin{enumerate}\itemsep2mm
\item $\Omega$ is $P_m$-hyperconvex in the sense of Definition~\ref{def_Pmhx};

\item $\Omega$ admits a negative exhaustion function that is in $\mathcal{SH}_m(\bar \Omega)\cap \mathcal{C}(\bar \Omega)$;

\item $\partial \Omega$  has a weak barrier at every point $z_0\in\partial \Omega$ that is in $\mathcal{SH}_m(\bar \Omega)$, i.e. there exists a function  $u\in\mathcal{SH}_m(\bar \Omega)$, such that $u<0$ on $\Omega$ and
     \[
     \lim_{z\to z_0\atop z\in\Omega} u(x)=0\, ;
     \]

\item for every $z\in \partial \Omega$, and every $\mu \in \mathcal{J}_z^m(\bar \Omega)$, we have that $\operatorname{supp} (\mu) \subseteq \partial \Omega$;

\item $\Omega$ admits a continuous negative exhaustion function which is $m$-subharmonic on $\bar \Omega$, smooth and strictly $m$-subharmonic on $\Omega$.

\end{enumerate}
\end{theorem}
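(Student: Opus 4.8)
The plan is to show the five statements are equivalent by running the single cycle $(1)\Rightarrow(3)\Rightarrow(4)\Rightarrow(5)\Rightarrow(2)\Rightarrow(1)$. Two of these links are immediate and I would dispose of them first. For $(5)\Rightarrow(2)$, a function that is $m$-subharmonic and continuous on $\bar\Omega$ and smooth, strictly $m$-subharmonic on $\Omega$ is in particular a negative exhaustion function lying in $\mathcal{SH}_m(\bar\Omega)\cap\mathcal{C}(\bar\Omega)$; and $(2)\Rightarrow(1)$ is just dropping the continuity requirement in Definition~\ref{def_Pmhx}. Thus the substance is in $(1)\Rightarrow(3)$, $(3)\Rightarrow(4)$, and above all $(4)\Rightarrow(5)$.

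For $(1)\Rightarrow(3)$ I would take the negative exhaustion function $\psi\in\mathcal{SH}_m(\bar\Omega)$ given by $(1)$ and argue that it is itself a weak barrier at every boundary point: since $\psi<0$ on $\Omega$ with relatively compact sublevel sets $\{\psi<c\}$, the value $\psi(z)$ cannot stay below any $c<0$ as $z\to\partial\Omega$, so upper semicontinuity together with $\psi\le 0$ forces $\lim_{\Omega\ni z\to z_0}\psi(z)=0$ for every $z_0\in\partial\Omega$. For $(3)\Rightarrow(4)$, fix $z_0\in\partial\Omega$ and $\mu\in\mathcal{J}_{z_0}^m(\bar\Omega)$, and let $u$ be a weak barrier at $z_0$ (which we may take nonpositive on $\bar\Omega$). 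By Theorem~\ref{cont} part $(b)$ the test class defining Jensen measures may be taken to be all of $\mathcal{SH}_m(\bar\Omega)$, so $u(z_0)\le\int u\,d\mu$; the barrier gives $u(z_0)=0$ while $u\le0$ gives $\int u\,d\mu\le0$, whence $\int u\,d\mu=0$ and $u=0$ $\mu$-a.e. As $u<0$ throughout $\Omega$, the set $\{u=0\}$ lies in $\partial\Omega$, so $\operatorname{supp}(\mu)\subseteq\partial\Omega$, which is $(4)$.

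The heart of the argument is $(4)\Rightarrow(5)$, which I would carry out in two stages. First I construct a continuous negative exhaustion. Pick a continuous $g$ on $\bar\Omega$ with $g<0$ on $\Omega$ and $g=0$ on $\partial\Omega$ (for instance $g=-\operatorname{dist}(\cdot,\partial\Omega)$), and form the envelope $\varphi(z)=\sup\{v(z):v\in\mathcal{SH}_m^o(\bar\Omega),\ v\le g\}$, which by Theorem~\ref{thm_edwards} part $(a)$ equals $\inf\{\int g\,d\mu:\mu\in\mathcal{J}_z^m(\bar\Omega)\}$. Then $\varphi\le g\le 0$ with $\varphi<0$ on $\Omega$, while for $z\in\partial\Omega$ condition $(4)$ makes every $\mu\in\mathcal{J}_z^m(\bar\Omega)$ supported where $g=0$, so $\varphi=0$ on $\partial\Omega$. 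As a supremum of continuous functions $\varphi$ is lower semicontinuous, and upper semicontinuity I would deduce from the weak-$^\ast$ compactness and the convergence of Jensen measures in Theorem~\ref{thm_convmeasures}. Hence $\varphi\in\mathcal{C}(\bar\Omega)$, and being an increasing limit of functions in $\mathcal{SH}_m^o(\bar\Omega)$ it lies in $\mathcal{SH}_m(\bar\Omega)$ by Theorem~\ref{cont} part $(a)$; its sublevel sets are relatively compact because $\varphi$ is continuous, negative on $\Omega$, and vanishes on $\partial\Omega$. (This already establishes $(2)$.)

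In the second stage I would upgrade $\varphi$ to a smooth, strictly $m$-subharmonic exhaustion on $\Omega$ by a Richberg-type regularization: exhausting $\Omega$ by the relatively compact sublevel sets $\{\varphi<c\}$, on each shell I regularize $\varphi$ by convolution and add a small multiple of the strictly $m$-subharmonic function $|z|^2$, obtaining smooth strictly $m$-subharmonic functions decreasing to $\varphi$, and patch these together with the gluing corollary (the maximum construction underlying Theorem~\ref{localization}) so that the result coincides with $\varphi$ near $\partial\Omega$ and therefore remains a continuous negative exhaustion in $\mathcal{SH}_m(\bar\Omega)$ that is smooth and strictly $m$-subharmonic on $\Omega$, which is $(5)$. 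I expect the main obstacle to be exactly the delicate points of this stage: proving continuity of the envelope up to $\partial\Omega$, which rests essentially on Theorem~\ref{thm_convmeasures} together with $(4)$, and then performing the smoothing while simultaneously preserving the boundary values, the exhaustion property, and membership in $\mathcal{SH}_m(\bar\Omega)$; a secondary technical point is justifying that the weak barrier in $(3)\Rightarrow(4)$ may be taken nonpositive on all of $\bar\Omega$.
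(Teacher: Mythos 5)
Your handling of the equivalence of (1)--(4) tracks the paper closely, just rearranged into a single cycle: your $(1)\Rightarrow(3)$ (the exhaustion is its own barrier everywhere) is a correct shortcut past the paper's detour through (4); your $(3)\Rightarrow(4)$ is the paper's integration argument, and the soft spot you flag --- that the weak barrier must be nonpositive on all of $\bar\Omega$, not just negative on $\Omega$ --- is real, but the paper assumes it just as silently (its proof of $(3)\Rightarrow(4)$ simply starts from ``$\varphi\le 0$, $\varphi(z)=0$''), so you are at the paper's level of rigor there. Your envelope in the first stage of $(4)\Rightarrow(5)$ is the paper's proof of $(4)\Rightarrow(2)$ with the continuous obstacle $-\operatorname{dist}(\cdot,\partial\Omega)$ in place of the constraint ``$\le -1$ on a small ball''; your variant is in fact slightly cleaner at the boundary, since weak-$^\ast$ convergence against a continuous $g$ removes the need for Lemma~2.3 of~\cite{CCW}, and once (4) forces $\varphi=0$ on $\partial\Omega$, the boundary limit is automatic from lower semicontinuity of the sup together with $\varphi\le g\le 0$. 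But your mechanism for continuity is off: Theorem~\ref{thm_convmeasures} gives, for minimizing measures $\mu_k$ at $z_k\to z$, only $\limsup_k\varphi(z_k)=\int g\,d\mu\ge\varphi(z)$ --- the wrong inequality for upper semicontinuity at interior points. The paper's actual route is to pass to $\varphi^*\in\mathcal{SH}_m(\Omega)$, invoke the generalized Walsh theorem (Proposition~3.2 of~\cite{Blocki_weak}) to get $\varphi^*\in\mathcal{C}(\bar\Omega)$, use Theorem~\ref{thm_pmboundary}(2) together with (4) to place $\varphi^*$ in $\mathcal{SH}_m(\bar\Omega)$, and then note $\varphi^*$ is a competitor in the envelope, whence $\varphi=\varphi^*$. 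Your appeal to Theorem~\ref{cont}(a) has the same defect: the envelope is a supremum, not a priori an increasing limit, and membership in $\mathcal{SH}_m(\bar\Omega)$ should come from Theorem~\ref{thm_pmboundary}(2), not from part (a).

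The genuine gap is the second stage of $(4)\Rightarrow(5)$, which as written is self-contradictory: if the glued function coincides with $\varphi$ on a shell of $\Omega$ near $\partial\Omega$, it equals a merely continuous function on an open subset of $\Omega$ and cannot be smooth there. The correct formulation lets the smooth strictly $m$-subharmonic approximant differ from $\varphi$ everywhere in $\Omega$, with error tending to $0$ at $\partial\Omega$, and recovers membership in $\mathcal{SH}_m(\bar\Omega)$ not from agreement with $\varphi$ but from Theorem~\ref{thm_pmboundary}(2): under (4), every $\mu\in\mathcal{J}_z^m(\bar\Omega)$ with $z\in\partial\Omega$ is carried by $\partial\Omega$, where the exhaustion (extended by zero) vanishes, so the boundary Jensen inequality is trivial. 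More importantly, you underestimate the smoothing itself: the existence of a negative exhaustion that is smooth and strictly $m$-subharmonic on an $m$-hyperconvex domain is a nontrivial Richberg-type theorem that the paper does not reprove but imports from~\cite{ACH}, and the paper deliberately postpones $(1)\Leftrightarrow(5)$ to Corollary~\ref{smooth}, after Theorem~\ref{cor_msubbdvalue} (hence after the extension theorem), precisely so that this one citation settles it. Your plan to carry out the regularization inline is workable in principle --- and would make the cycle self-contained where the paper's is not --- but convolution only yields $m$-subharmonicity on shrunken subdomains, strictness must survive the patching, and the uniform control near $\partial\Omega$ must be maintained; as it stands this step is a sketch of a known hard lemma rather than a proof.
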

\begin{proof}
  $(1) \Rightarrow (4):$ Assume that $\Omega$ is $P_m$-hyperconvex, then there exists a negative exhaustion function $\psi \in \mathcal{SH}_m(\bar \Omega)$. Take $z \in \partial \Omega$ and let $\mu \in \mathcal{J}_z^m(\bar \Omega)$, then
\[
0=\psi(z)\leq \int \psi \, d\mu \leq 0\, .
\]
  Since $\psi <0 $ on $\Omega$, we have that $\operatorname{supp}(\mu) \subseteq \partial \Omega$.

\medskip

$(2) \Rightarrow (1):$ Follows by Definition \ref{def_Pmhx}.

\medskip

For the implications $(4)\Rightarrow (3)$, $(4) \Rightarrow (2)$, and $(4)\Rightarrow (1)$, assume that for all $w \in \partial \Omega$, the every measures $\mu \in \mathcal{J}_w^m(\bar \Omega)$ satisfy $\operatorname{supp} (\mu) \subseteq \partial \Omega$. Let $z\in \Omega$, $r>0$ be such that $B(z,r)\Subset \Omega$ and let
\[
u(z)= \sup\{\varphi(z): \varphi \in \mathcal{SH}_m(\bar\Omega)\cap \mathcal{C}(\bar \Omega), \varphi \leq 0, \varphi \leq -1 \ \text{on} \ B(z,r)\}\, .
\]
Then $u$ is lower semicontinuous, and by Theorem~\ref{thm_edwards} part (b), we have that
\[
u(z)=\inf\left\{\int -\chi_{B(z,r)} \, d\mu : \mu \in \mathcal{J}_z^m(\Omega)\right\}=-\sup\left\{\mu(B(z,r)): \mu \in \mathcal{J}_z^m(\Omega)\right\}\, .
\]
We shall prove that $\lim_{\xi\to \partial \Omega}u(\xi)=0$. Assume the contrary, i.e. that there is a point $z \in \partial \Omega$ such that $\liminf_{\xi \rightarrow z}u(\xi)<0$. Then we can find a sequence $z_n \rightarrow z$ such that $u(z_n)<-\varepsilon$ for every $n$. We can find corresponding measures $\mu_n \in \mathcal{J}_{z_n}^m(\bar\Omega)$ such that $\mu_n(B(z,r))>\varepsilon$. By Theorem~\ref{thm_convmeasures} we can (by passing to a subsequence) assume that $\mu_n$ converges weak-$^\ast$ to a measure $\mu \in \mathcal{J}_z^m(\bar \Omega)$. Then, using Lemma~2.3 in \cite{CCW}, we have that
\[
\mu(\overline {B(z,r)})=\int \chi_{\overline {B(z,r)}} \, d\mu \geq \limsup_{n \rightarrow \infty} \int \chi_{\overline {B(z,r)}} \, d\mu_n =\limsup_{n \rightarrow \infty} \mu_n(\overline {B(z,r)}) >\varepsilon\geq 0.
\]
This contradicts the assumption that $\mu \in \mathcal{J}_z^m(\bar \Omega)$ only has support on the boundary. It remains to show that $u \in \mathcal{SH}_m(\bar \Omega)\cap \mathcal{C}(\bar \Omega)$.
We have that $u^*\in \mathcal {SH}_m(\Omega)\cap \mathcal {USC}(\bar \Omega)$ and $\lim_{\xi\to \partial \Omega} u^*(\xi)=0$ so by the generalized Walsh theorem (Proposition~3.2 in~\cite{Blocki_weak}) we get that $u^*\in \mathcal C(\bar\Omega)$. This means that $u=u^*$ and $u$ is a continuous function. Finally Theorem~\ref{thm_pmboundary} gives us that $u\in \mathcal {SH}_m(\bar \Omega)$. Note that $u$ is a continuous exhaustion function for $\Omega$.

\medskip

$(3) \Rightarrow (4):$ Let $z \in \partial \Omega$ and assume that there exists a function $\varphi \in \mathcal{SH}_m(\bar \Omega)$, $\varphi \neq 0$ such that $\varphi \leq 0$ and $\varphi(z)=0$. Let $\mu \in \mathcal{J}_z^m(\bar \Omega)$, then
\[
0=\varphi(z)\leq \int \varphi \, d\mu \leq 0\, .
\]
Hence $\operatorname{supp}(\mu) \subseteq \partial \Omega$.

\medskip

The proof of equivalence (1)$\Leftrightarrow$(5) is postponed to Corollary~\ref{smooth}.

\end{proof}

\section{An extension theorem}\label{sec_extention}

In this section we shall prove the extension theorem discussed in the introduction (Theorem~\ref{ext_in_pm_hyp}).
We provide also two new characterizations of $P_m$-hyperconvex domains (Corollary~\ref{cor2} and Theorem~\ref{localization_Pm}), and finally we prove that for a $P_m$-hyperconvex domain $\Omega$ one can find a continuous $m$-subharmonic exhaustion function on $\bar \Omega$, which is strictly $m$-subharmonic and smooth in $\Omega$ (Corollary~\ref{smooth}).

 We shall need the following lemma.

\begin{lemma}\label{lem}
  Assume that $\Omega$ is a $P_m$-hyperconvex domain in $\C^n$, $1 \leq m \leq n$,  and let $U$ be an open neighborhood of $\partial \Omega$. If $f\in\mathcal{SH}_m(U)\cap C^{\infty}(U)$ is a smooth function in some neighborhood of $\partial \Omega$, then there is a function $F \in \mathcal{SH}_m(\bar \Omega) \cap \mathcal{C}(\bar \Omega)$ such that $F=f$ on $\partial \Omega$.
\end{lemma}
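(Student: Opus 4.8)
The plan is to build $F$ by gluing a suitable perturbation of $f$ near $\partial\Omega$ to a constant in the interior, using the continuous exhaustion function supplied by $P_m$-hyperconvexity. First I would invoke Theorem~\ref{thm_charPmhx}~$(2)$ to fix a negative continuous exhaustion function $\psi\in\mathcal{SH}_m(\bar\Omega)\cap\mathcal{C}(\bar\Omega)$; since $\psi$ is a continuous exhaustion that is negative on $\Omega$, a short argument shows $\psi=0$ on $\partial\Omega$ and $\psi<0$ on $\Omega$, so in particular $\{\psi=0\}\cap\bar\Omega=\partial\Omega$. Because $f$ is defined only on the neighborhood $U$ of $\partial\Omega$, I would choose a level $c<0$ close to $0$; then $\bigcap_{c<0}\{\psi\ge c\}=\partial\Omega\subset U$, so by compactness the collar $\{z\in\bar\Omega:\psi(z)\ge c\}$ is contained in $U$ for $c$ near $0$, and I set $K=\max\{|f(z)|:\psi(z)\ge c\}$. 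The key object is the function $g=f+A\psi$ on $\{\psi>c\}$: it is continuous, and on every compact piece of $\{\psi>c\}$ it lies in $\mathcal{SH}_m$, being a nonnegative combination of $f\in\mathcal{SH}_m^o$ (the restriction of a smooth $m$-subharmonic function) and of $\psi$, which is $m$-subharmonic on every compact subset of $\bar\Omega$ by Corollary~\ref{cor}; here I use Theorem~\ref{thm_basicprop2}~$(1)$.

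Next I would fix the constants that make the gluing work. Set $M=K+1$, pick intermediate levels $c<c'<c''<0$, and choose $A>0$ so large that $K+Ac''<-M$. Then on the overlap region $\{c'<\psi<c''\}$ one has $\psi<c''$, hence $g=f+A\psi<K+Ac''<-M$. I then define
\[
F(z)=\begin{cases}\max\{f(z)+A\psi(z),\,-M\}, & \psi(z)>c',\\ -M, & \psi(z)\le c'.\end{cases}
\]
On the overlap of the two open sets $O_1=\{\psi>c'\}$ and $O_2=\{\psi<c''\}$ the first branch equals $-M$, because $g<-M$ there, so the two descriptions agree; since $O_1\cup O_2=\bar\Omega$, this already shows that $F$ is well defined and continuous on $\bar\Omega$. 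On the boundary $\psi=0>c'$, so $F=\max\{f,-M\}=f$, using $f\ge-K>-M$; thus $F=f$ on $\partial\Omega$, as required.

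Finally I would establish $F\in\mathcal{SH}_m(\bar\Omega)$ via the localization Theorem~\ref{localization}, checking $m$-subharmonicity on small balls. On $O_2=\{\psi<c''\}$ one has $F\equiv-M$, which is trivially $m$-subharmonic; on $O_1=\{\psi>c'\}\subset\{\psi>c\}\subset U$ the function $F=\max\{g,-M\}$ is, on each compact ball, the maximum of the two continuous functions $g,-M\in\mathcal{SH}_m$, hence $m$-subharmonic by Theorem~\ref{thm_basicprop2}~$(2)$. As $O_1,O_2$ cover $\bar\Omega$, Theorem~\ref{localization} gives $F\in\mathcal{SH}_m(\bar\Omega)\cap\mathcal{C}(\bar\Omega)$. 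I expect the main obstacle to be exactly the verification of $m$-subharmonicity \emph{up to and across $\partial\Omega$}: this is where one must work with $m$-subharmonic functions on the compact set $\bar\Omega$, and it is precisely the hypothesis $\psi\in\mathcal{SH}_m(\bar\Omega)$ — that is, $P_m$-hyperconvexity rather than mere $m$-hyperconvexity — that makes $g=f+A\psi$ belong to $\mathcal{SH}_m$ on compact neighborhoods of boundary points and lets the localization theorem close the argument. A secondary point needing care is choosing $A$ large enough that $F$ is genuinely the constant $-M$ on a full neighborhood of the interface $\{\psi=c'\}$, so that no delicate gluing estimate is required there.
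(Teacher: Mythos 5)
Your proof is correct, and it takes a genuinely different route from the paper's. The paper keeps $f$ alive near $\partial\Omega$ by multiplying it with a smooth cutoff $g$ (so $gf$ is globally defined but no longer $m$-subharmonic where $g$ varies) and then repairs the damage with a large multiple of a \emph{strictly} $m$-subharmonic compensator: it takes the smooth, strictly $m$-subharmonic exhaustion $\varphi$ of the ($m$-hyperconvex) domain from~\cite{ACH}, an increasing uniform approximation $\psi_j\nearrow\psi$ supplied by Theorem~\ref{cont} part $(a)$, builds $\varphi_j=\max\{\varphi-\tfrac1j,M\psi_j\}$ on slightly larger sets, and shows $F_j=C\varphi_j+gf\in\mathcal{SH}_m^o(\bar\Omega)$ converges uniformly to $F=C\max\{\varphi,M\psi\}+gf$. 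You avoid the cutoff, the strictness, and the approximating sequence altogether: since $\psi\in\mathcal{SH}_m(\bar\Omega)\cap\mathcal{C}(\bar\Omega)$ restricts to every compact subset by Corollary~\ref{cor}, and sums and maxima stay $m$-subharmonic by Theorem~\ref{thm_basicprop2}, the classical gluing $\max\{f+A\psi,-M\}$ on the collar $\{\psi>c'\}$, constant $-M$ on $\{\psi\le c'\}$, is locally in $\mathcal{SH}_m\cap\mathcal{C}$ near every point of $\bar\Omega$, and Theorem~\ref{localization} closes the argument; your choice $K+Ac''<-M$ correctly forces $F\equiv-M$ on the full open set $\{\psi<c''\}$, so no delicate matching is needed at the interface $\{\psi=c'\}$. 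What each approach buys: yours is more elementary — it needs neither the existence theorem of~\cite{ACH} nor Theorem~\ref{cont} part $(a)$, and in fact never uses smoothness of $f$ (continuity on $U$ suffices, since you only need $f$ restricted to compact pieces to lie in $\mathcal{SH}_m^o$), whereas the paper's argument genuinely needs differentiability of $gf$ so that its Hessian can be absorbed into $C\varphi_j$. The one point you should write out rather than merely assert is that the negative continuous exhaustion $\psi$ vanishes identically on $\partial\Omega$: if $\psi(z_0)<c<0$ at some $z_0\in\partial\Omega$, continuity would place interior points arbitrarily close to $z_0$ inside $\{\psi<c\}\Subset\Omega$, a contradiction — a fact the paper's own proof also uses tacitly when it computes $-CM\psi_j(z)\to-CM\psi(z)=0$ on $\partial\Omega$.
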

\begin{proof}
Let $\psi\in \mathcal {SH}_m(\bar \Omega)\cap \mathcal C(\bar \Omega)$ be an exhaustion function for $\Omega$ (see Theorem~\ref{thm_charPmhx}). Let $U$ be an open set such that $\partial \Omega \subseteq U$ and $f\in \mathcal{SH}_m(U)\cap \mathcal C^{\infty}(U)$, and let $V$ be an open set such that $\partial \Omega \subseteq V \Subset U$. Moreover, let $K\subseteq \Omega$ be a compact set such that $\bar\Omega \subseteq K \cup U$ and $\partial K \subseteq V$.
Since $\Omega$ is also $m$-hyperconvex there exists a smooth and strictly $m$-subharmonic exhaustion function $\varphi$ for $\Omega$ (see~\cite{ACH}). Let $M>1$ be a constant large enough so that for all $z\in K$
\[
\varphi(z) - 1 > M \psi(z)\, .
\]
From Theorem~\ref{cont} part $(a)$ there exists an increasing sequence $\psi_j\in \mathcal{SH}_m(\Omega_j)\cap \mathcal \mathcal{C}(\bar \Omega_j)$, where $\bar \Omega \subseteq \Omega_j\Subset \Omega \cup V$, and such that $\psi_j\to \psi$ uniformly on $\bar \Omega$ so that
\[
\psi-\psi_j<\frac {1}{Mj}\, .
\]
Let us define
\[
\varphi_j :=
		\begin{cases}
			\max\left\{\varphi-\frac{1}{j}, M \psi_j\right\}, & \text{ if } z \in \Omega\, ,\\
			M\psi_j, & \text{ if } z \in \Omega_j \setminus \Omega\, .
		\end{cases}
\]
Note that the function $\varphi_j$ is $m$-subharmonic and continuous on $\Omega_j$, and $\varphi_j=\varphi-\frac{1}{j}$ on $K$. Next let $g$ be a smooth function such that $g=1$ on $V$, and $\operatorname{supp}(g)\subseteq U$. Since $\varphi_j$ is strictly $m$-subharmonic on the set where $g$ is non-constant, we can choose a constant $C$ so large that the function
\[
F_j:=C\varphi_j+gf
\]
belongs to $\mathcal{SH}_m^o(\bar\Omega)$. Observe that
\[
\max \{\varphi,M\psi\}-\max\left\{\varphi-\frac{1}{j}, M\psi_j\right\} \leq \frac{1}{j}\, ,
\]
and define
\[
F:=C\max\{\varphi,M\psi\}+gf.
\]
Then we have that
\[
F \geq  F_j \geq F-\frac{1}{j}\, ,
\]
and therefore, by uniform convergence, we get that $F\in \mathcal{SH}_m(\bar\Omega)\cap \mathcal{C}(\bar\Omega)$. Furthermore, for $z \in \partial \Omega$, we have that
\[
0 \leq f(z)-F_j(z)=-C\varphi_j(z)=-CM\psi_j(z)\to -CM\psi (z)=0\,\text{ as } j\to \infty\, ,
\]
and we see that $F=f$ on $\partial \Omega$.
\end{proof}

Now we state and prove the main theorem of this section.

\begin{theorem}\label{ext_in_pm_hyp}
Let $\Omega$ be a bounded $P_m$-hyperconvex domain in $\mathbb C^n$, $1 \leq m \leq n$, and let $f$ be a real-valued function defined on $\partial \Omega$. Then the following are equivalent:

\medskip

\begin{enumerate}\itemsep2mm

\item there exists $F\in \mathcal{SH}_m(\bar \Omega)$ such that $F=f$ on $\partial \Omega$;

\item $f\in \mathcal{SH}_m(\partial \Omega)$.
\end{enumerate}
Furthermore, if $f$ is continuous on $\partial \Omega$, then the function $F$ can be chosen to be continuous on $\Omega$.
\end{theorem}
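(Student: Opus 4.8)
The plan is to prove the two implications separately, with the hard direction being $(2)\Rightarrow(1)$, and then handle the continuity addendum as a refinement of the construction.

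The direction $(1)\Rightarrow(2)$ is immediate from the machinery already developed. If $F\in\mathcal{SH}_m(\bar\Omega)$ with $F=f$ on $\partial\Omega$, then by Theorem~\ref{thm_pmboundary} part $(1)$ we have $F\in\mathcal{SH}_m(\partial\Omega)$, and since $f$ is precisely the restriction $F|_{\partial\Omega}$, we conclude $f\in\mathcal{SH}_m(\partial\Omega)$. No further work is needed here.

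For the hard direction $(2)\Rightarrow(1)$, the plan is to build $F$ as a supremum of $m$-subharmonic candidates that are dominated by $f$ on the boundary, in the spirit of a Perron--Bremermann envelope, and then use the $P_m$-hyperconvexity to control the boundary values. First I would use Theorem~\ref{cont} part $(b)$ applied to $f\in\mathcal{SH}_m(\partial\Omega)$ to obtain a decreasing sequence $f_j\in\mathcal{SH}_m^o(\partial\Omega)$ with $f_j\searrow f$; each $f_j$ is the restriction to $\partial\Omega$ of a function that is $m$-subharmonic and smooth on a neighborhood of $\partial\Omega$. This is exactly the hypothesis of Lemma~\ref{lem}, so for each $j$ the lemma furnishes a function $F_j\in\mathcal{SH}_m(\bar\Omega)\cap\mathcal{C}(\bar\Omega)$ with $F_j=f_j$ on $\partial\Omega$. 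The main obstacle will be organizing these $F_j$ into a single limiting object whose boundary values are exactly $f$ rather than merely $\limsup$-dominated by it: the $F_j$ need not be monotone, and a naive supremum over $j$ need only be taken after passing to a candidate envelope. The cleanest route is to define
\[
F(z)=\sup\{\,v(z): v\in\mathcal{SH}_m(\bar\Omega),\ v^\ast\leq f \text{ on } \partial\Omega\,\},
\]
or equivalently the envelope generated by the $F_j$, and then show $F\in\mathcal{SH}_m(\bar\Omega)$ (using Theorem~\ref{thm_basicprop2} for the stability under such operations together with Theorem~\ref{thm_pmboundary} part $(2)$) and that $F=f$ on $\partial\Omega$. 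The upper bound $F\le f$ on $\partial\Omega$ should follow from the defining constraint together with the boundary behaviour of Jensen measures captured in Theorem~\ref{thm_charPmhx} part $(4)$, namely that measures in $\mathcal{J}_z^m(\bar\Omega)$ for $z\in\partial\Omega$ are supported on $\partial\Omega$; the lower bound $F\ge f$ on $\partial\Omega$ should follow because each $F_j$ is an admissible competitor, giving $F\ge F_j=f_j$ and hence $F\ge\inf_j f_j=f$ in the limit.

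The continuity addendum is then handled by choosing the competitors more carefully. If $f$ is continuous on $\partial\Omega$, I would take the approximating sequence $f_j$ to converge uniformly to $f$ (this uniformity is available from Theorem~\ref{cont} part $(b)$ combined with the remark following Theorem~\ref{cont}), so that the corresponding $F_j\in\mathcal{SH}_m(\bar\Omega)\cap\mathcal{C}(\bar\Omega)$ from Lemma~\ref{lem} satisfy uniform estimates near the boundary; pairing this with a continuous negative exhaustion function from Theorem~\ref{thm_charPmhx} part $(2)$ to pin down the interior behaviour should force the envelope $F$ to be continuous up to $\bar\Omega$. The delicate point to verify is that the envelope's upper semicontinuous regularization does not jump at the boundary, which is precisely where the exhaustion function and the support property of Jensen measures must be invoked together.
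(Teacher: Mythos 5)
Your direction $(1)\Rightarrow(2)$ is correct and is exactly the paper's argument. In $(2)\Rightarrow(1)$, however, there is a genuine gap at the lower bound $F\geq f$ on $\partial\Omega$. Your envelope admits only competitors $v\in\mathcal{SH}_m(\bar\Omega)$ with $v\leq f$ on $\partial\Omega$, but the functions $F_j$ you obtain from Lemma~\ref{lem} satisfy $F_j=f_j$ on $\partial\Omega$ with $f_j\searrow f$, so $f_j\geq f$ and $F_j$ is \emph{not} an admissible competitor; the claimed chain $F\geq F_j=f_j\to f$ collapses at its first link. Nor can you repair this by subtracting the error $\sup_{\partial\Omega}(f_j-f)$: in the general case $f$ is merely upper semicontinuous, the convergence $f_j\searrow f$ is not uniform (Dini requires $f$ continuous), and that supremum need not tend to zero. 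In fact, showing that your single envelope attains the value $f$ at each boundary point is essentially equivalent to the theorem itself -- the only obvious competitor realizing $f$ on $\partial\Omega$ is the extension one is trying to build -- so the argument as sketched is circular. A second, related gap: Theorem~\ref{thm_basicprop2} contains no stability statement for suprema of infinite families, a raw upper envelope on a compact set need not be upper semicontinuous, and Theorem~\ref{thm_pmboundary} part $(2)$ can only be applied after one already knows the envelope is $m$-subharmonic in $\Omega$, upper semicontinuous, and satisfies the Jensen inequality at boundary points; you flag the regularization jump as ``delicate'' but supply no mechanism to rule it out.

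The paper's proof supplies exactly the missing mechanism, and it does so \emph{level by level} rather than with one global envelope. For each $j$ it takes the harmonic extension $h_j$ of $u_j$ (available since a $P_m$-hyperconvex domain is regular) and forms $Sh_j=\sup\{v\in\mathcal{SH}_m(\bar\Omega): v\leq h_j\}$, where the constraint holds on all of $\bar\Omega$, not just on the boundary. Edwards' duality (Theorem~\ref{thm_edwards} part (b)) shows $Sh_j$ is lower semicontinuous; Lemma~\ref{lem} produces $H_j\in\mathcal{SH}_m(\bar\Omega)\cap\mathcal{C}(\bar\Omega)$ with $H_j=h_j$ on $\partial\Omega$, and since $H_j\leq h_j$ by the maximum principle, $H_j$ \emph{is} admissible and pins down $(Sh_j)^*=h_j=H_j$ on $\partial\Omega$; the support property of Jensen measures (Theorem~\ref{thm_charPmhx} part $(4)$) then verifies the boundary Jensen inequality for $(Sh_j)^*$, so Theorem~\ref{thm_pmboundary} part $(2)$ gives $(Sh_j)^*=Sh_j\in\mathcal{SH}_m(\bar\Omega)\cap\mathcal{C}(\bar\Omega)$. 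Because the $h_j$ decrease, the $Sh_j$ decrease, and $F=\lim_j Sh_j$ lies in $\mathcal{SH}_m(\bar\Omega)$ by Theorem~\ref{thm_basicprop2} part $(3)$, with $F=\lim_j u_j=f$ on $\partial\Omega$ -- no uniform convergence and no admissibility of the $F_j$ is ever needed. Your Dini-based remark for continuous $f$ is sound as far as it goes, but the paper's continuity argument instead squeezes $(Sh)^*\leq F\leq Sh$ for the envelope $Sh$ beneath the harmonic extension of $f$ itself, yielding $F=Sh\in\mathcal{SH}_m(\bar\Omega)\cap\mathcal{C}(\bar\Omega)$ directly. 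To salvage your plan you would have to replace the single envelope constrained by $f$ with the paper's decreasing family of envelopes constrained by harmonic majorants.
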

\begin{proof} (1)$\Rightarrow$(2): Follows immediately from Corollary~\ref{cor}.

\bigskip

(2)$\Rightarrow$(1): Let $f\in \mathcal{SH}_m(\partial \Omega)$, then by Theorem~\ref{cont} part $(b)$ there exists a decreasing sequence $u_j\in\mathcal{SH}_m^o(\partial \Omega)$ of smooth functions such that $u_j\to f$, $j\to \infty$. By assumption $\Omega$ is in particular a regular domain, and therefore there is a sequence of harmonic functions $h_j$ defined on $\Omega$, continuous on $\bar \Omega$ such that $h_j=u_j$ on $\partial \Omega$. Define
\[
Sh_j=\sup\left \{v\in \mathcal{SH}_m(\bar \Omega): v\leq h_j\right\}\, ,
\]
then by Theorem~\ref{thm_edwards} we have that
\[
Sh_j=\sup\left \{v\in \mathcal{SH}_m(\bar \Omega)\cap\mathcal \mathcal{C}(\bar \Omega): v\leq h_j\right\}\, .
\]
Hence, $Sh_j$ is lower semicontinuous. Next we shall prove that in fact $Sh_j$ is continuous. By Lemma~\ref{lem} there exists $H_j\in \mathcal{SH}_m(\bar \Omega)\cap\mathcal C (\bar \Omega)$ such that $H_j=h_j$ on $\partial \Omega$. This implies that $H_j\leq Sh_j\leq (Sh_j)^*$, so $(Sh_j)^*=h_j=H_j$ on $\partial \Omega$. Note also that for all $z\in \partial \Omega$, and all $\mu\in \mathcal J_z^m(\bar \Omega)$ it holds that $\operatorname{supp}(\mu)\subseteq \partial \Omega$ by Theorem~\ref{thm_charPmhx} and then
\[
\int_{\bar \Omega}(Sh_j)^*\,d\mu=\int_{\partial \Omega}(Sh_j)^*\,d\mu=\int_{\partial \Omega}H_j\,d\mu=\int_{\bar \Omega}H_j\,d\mu\geq H_j(z)=(Sh_j)^*(z)\, ,
\]
and therefore by Theorem~\ref{thm_pmboundary} $(Sh_j)^*\in \mathcal{SH}_m(\bar \Omega)$, so $(Sh_j)^*=Sh_j$ and finally $Sh_j\in \mathcal{SH}_m(\bar \Omega)\cap\mathcal{C}(\bar \Omega)$. Now let
\[
F=\lim_{j\to \infty}Sh_j\, .
\]
Observe that $F=f$ on $\partial \Omega$, and $F\in \mathcal{SH}_m(\bar \Omega)$, since it is the limit of a decreasing sequence $Sh_j\in \mathcal{SH}_m(\bar \Omega)\cap\mathcal C(\bar \Omega)$.

\bigskip

To prove the last statement of this theorem assume that $f\in \mathcal C(\partial \Omega)$. Let $h$ be a harmonic function on $\Omega$ that is continuous on $\bar \Omega$ with boundary values $f$. As in the previous part of the proof define
\[
Sh=\sup\left \{v\in \mathcal{SH}_m(\bar \Omega): v\leq h\right\}=\sup\left \{v\in \mathcal{SH}_m(\bar \Omega)\cap\mathcal C(\bar \Omega): v\leq h\right\}\, ,
\]
so $Sh$ is lower semicontinuous. Furthermore, since $Sh\leq Sh_j$, then $(Sh)^*\leq (Sh_j)^*=Sh_j$ and
\[
(Sh)^*\leq \lim_{j\to \infty}Sh_j=F\leq Sh\, ,
\]
we have that $(Sh)^*=Sh=F\in \mathcal{SH}_m(\bar \Omega)\cap\mathcal C(\bar \Omega)$.
\end{proof}

Earlier we saw that if $\Omega$ is $P_m$-hyperconvex and if $z \in \partial \Omega$, then the measures in $\mathcal{J}_z^m(\bar \Omega)$ only have support on $\partial \Omega$. Following the line of \cite{HP} we will now see that, when $\Omega$ is $P_m$-hyperconvex, we actually have that $\mathcal J_z^m(\bar \Omega)=\mathcal J_z^m(\partial \Omega)$ for $z \in \partial \Omega$. This gives us another characterization of $P_m$-hyperconvex domains.

\begin{corollary}\label{cor2}
Let $\Omega$ be a bounded domain in $\C^n$. Then $\Omega$ is $P_m$-hyperconvex if, and only if, for all $z\in \partial \Omega$ we have $\mathcal J_z^m(\bar \Omega)=\mathcal J_z^m(\partial \Omega)$.
\end{corollary}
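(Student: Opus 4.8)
The plan is to reduce both implications to the support characterization of $P_m$-hyperconvexity, namely the equivalence $(1)\Leftrightarrow(4)$ in Theorem~\ref{thm_charPmhx}, and to use the extension Lemma~\ref{lem} to upgrade Jensen measures from the test class $\mathcal{SH}_m^o(\bar\Omega)$ to the a priori larger class $\mathcal{SH}_m^o(\partial\Omega)$. Throughout I view $\mathcal{J}_z^m(\partial\Omega)$ as a set of measures on $\bar\Omega$ via the Remark following Definition~\ref{def_JzmK}; that Remark already gives, for every $z\in\partial\Omega$, the inclusion $\mathcal{J}_z^m(\partial\Omega)\subseteq\mathcal{J}_z^m(\bar\Omega)$. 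So the real content is the reverse inclusion together with the support statement, and I would record the easy inclusion first to fix this reduction.

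For the direction ``$\mathcal{J}_z^m(\bar\Omega)=\mathcal{J}_z^m(\partial\Omega)$ for all $z\in\partial\Omega$ $\Rightarrow$ $\Omega$ is $P_m$-hyperconvex'', I would fix $z\in\partial\Omega$ and $\mu\in\mathcal{J}_z^m(\bar\Omega)$. By hypothesis $\mu\in\mathcal{J}_z^m(\partial\Omega)$, and by Definition~\ref{def_JzmK} such a measure is a probability measure carried by the compact set $\partial\Omega$, so $\operatorname{supp}(\mu)\subseteq\partial\Omega$. Thus the support condition of Theorem~\ref{thm_charPmhx}(4) holds, and the implication $(4)\Rightarrow(1)$ in that theorem gives that $\Omega$ is $P_m$-hyperconvex. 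This direction is essentially immediate once one unwinds the definitions.

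The substantive direction is the converse. Assume $\Omega$ is $P_m$-hyperconvex and fix $z\in\partial\Omega$; by the easy inclusion it suffices to prove $\mathcal{J}_z^m(\bar\Omega)\subseteq\mathcal{J}_z^m(\partial\Omega)$. Let $\mu\in\mathcal{J}_z^m(\bar\Omega)$. First, $(1)\Rightarrow(4)$ of Theorem~\ref{thm_charPmhx} gives $\operatorname{supp}(\mu)\subseteq\partial\Omega$, so $\mu$ is a probability measure on $\partial\Omega$ and only the Jensen inequality against $\mathcal{SH}_m^o(\partial\Omega)$ remains. I would take $u\in\mathcal{SH}_m^o(\partial\Omega)$, extended to a function that is $m$-subharmonic and continuous on a neighborhood $U$ of $\partial\Omega$, and then regularize by convolution on a slightly smaller neighborhood $U'$ of $\partial\Omega$ to obtain smooth functions $u_\varepsilon\in\mathcal{SH}_m(U')\cap C^\infty(U')$ with $u_\varepsilon\searrow u$. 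Lemma~\ref{lem} applied to each $u_\varepsilon$ produces $F_\varepsilon\in\mathcal{SH}_m(\bar\Omega)\cap\mathcal{C}(\bar\Omega)$ with $F_\varepsilon=u_\varepsilon$ on $\partial\Omega$. Since $F_\varepsilon\in\mathcal{SH}_m(\bar\Omega)$, Definition~\ref{def_msubkomp} applied to $\mu\in\mathcal{J}_z^m(\bar\Omega)$ yields
\[
u_\varepsilon(z)=F_\varepsilon(z)\leq\int F_\varepsilon\,d\mu=\int_{\partial\Omega}u_\varepsilon\,d\mu,
\]
where the last equality uses $\operatorname{supp}(\mu)\subseteq\partial\Omega$ and $F_\varepsilon=u_\varepsilon$ there. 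Letting $\varepsilon\to 0$, the monotone convergence theorem together with the continuity of $u$ gives $u(z)\leq\int u\,d\mu$, hence $\mu\in\mathcal{J}_z^m(\partial\Omega)$, as desired.

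The hard part is precisely this last step. The difficulty is that $\mathcal{SH}_m^o(\partial\Omega)$ may contain functions that are $m$-subharmonic only near $\partial\Omega$ and admit no $m$-subharmonic extension to all of $\bar\Omega$, so membership of $\mu$ in $\mathcal{J}_z^m(\bar\Omega)$ does not by itself test $\mu$ against these functions. The extension Lemma~\ref{lem}—after the routine smoothing needed to meet its $C^\infty$ hypothesis—is exactly the device that bridges this gap, and $P_m$-hyperconvexity enters twice: once to make Lemma~\ref{lem} available, and once through the localization $\operatorname{supp}(\mu)\subseteq\partial\Omega$ that lets the integrals over $\bar\Omega$ collapse to integrals over $\partial\Omega$.
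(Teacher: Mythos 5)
Your proof is correct, and both directions rest on the same skeleton as the paper's: the easy direction is identical (support in $\partial\Omega$ plus the implication $(4)\Rightarrow(1)$ of Theorem~\ref{thm_charPmhx}), and the substantive direction follows the same three-step pattern of extending boundary test functions, invoking $\operatorname{supp}(\mu)\subseteq\partial\Omega$ from Theorem~\ref{thm_charPmhx}, and transferring the Jensen inequality. The difference is the extension device. The paper takes $f\in\mathcal{SH}_m^o(\partial\Omega)$, notes $f\in\mathcal{SH}_m(\partial\Omega)\cap\mathcal{C}(\partial\Omega)$, and applies the full extension theorem (Theorem~\ref{ext_in_pm_hyp}) to get $F\in\mathcal{SH}_m(\bar\Omega)$ with $F=f$ on $\partial\Omega$ in one stroke, then runs exactly your chain $f(z)=F(z)\leq\int_{\bar\Omega}F\,d\mu=\int_{\partial\Omega}f\,d\mu$ with no smoothing and no limit. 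You instead bypass Theorem~\ref{ext_in_pm_hyp} and go directly through Lemma~\ref{lem}: mollify the neighborhood representative of $u$ to meet the $C^\infty$ hypothesis of the lemma (this is legitimate --- convolution preserves $m$-subharmonicity and $u\ast\rho_\varepsilon\searrow u$ since $\mathcal{SH}_m\subseteq\mathcal{SH}$, with the shrunken neighborhood still containing the compact set $\partial\Omega$), extend each $u_\varepsilon$, and pass to the limit by monotone convergence; your appeal to Definition~\ref{def_msubkomp} for $F_\varepsilon(z)\leq\int F_\varepsilon\,d\mu$ is exactly right since $F_\varepsilon\in\mathcal{SH}_m(\bar\Omega)$. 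What your route buys is logical economy: it depends only on Lemma~\ref{lem} and elementary smoothing, avoiding the heavier machinery inside the proof of Theorem~\ref{ext_in_pm_hyp} (harmonic majorants, the envelope $Sh_j$, and the generalized Walsh theorem), so it would let Corollary~\ref{cor2} be stated right after Lemma~\ref{lem}. What the paper's route buys is brevity: Theorem~\ref{ext_in_pm_hyp} is already available at that point in the text, handles a merely continuous boundary datum directly, and dispenses with the mollification and the $\varepsilon$-limit altogether.
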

\begin{proof}
First assume that $\Omega$ is $P_m$-hyperconvex. It is clear that $\mathcal J_z^m(\partial \Omega)\subseteq\mathcal J_z^m(\bar \Omega)$. To prove the converse inclusion take $z\in \partial \Omega$, $\mu \in \mathcal J_z^m(\bar\Omega)$ and $f\in \mathcal{SH}_m^o(\partial \Omega)$, then $f\in \mathcal{SH}_m(\partial \Omega)\cap \mathcal C(\partial \Omega)$ and by Theorem~\ref{ext_in_pm_hyp} there exists $F\in \mathcal{SH}_m(\bar\Omega)$ such that $F=f$ on $\partial \Omega$. For $z\in \partial \Omega$ and $\mu\in \mathcal J_z^m(\bar \Omega)$ we have $\operatorname{supp}(\mu)\subseteq \partial \Omega$ and
\[
f(z)=F(z)\leq \int_{\bar\Omega}F\,d\mu=\int_{\partial \Omega}F\,d\mu=\int_{\partial\Omega}f\,d\mu,
\]
which means that $\mu\in \mathcal J_z^m(\partial \Omega)$.

For the converse implication assume that for all $z\in \partial \Omega$ we have $\mathcal J_z^m(\bar \Omega)=\mathcal J_z^m(\partial \Omega)$, then for all $z\in \partial \Omega$ and all  $\mu \in \mathcal J_z^m(\bar \Omega)$ we have $\operatorname{supp}(\mu)\subseteq \partial \Omega$ so by Theorem~\ref{thm_charPmhx} $\Omega$ is $P_m$-hyperconvex.

\end{proof}

On $P_m$-hyperconvex domains, we can now characterize the functions $u \in \mathcal{SH}_m(\bar \Omega)$ as those functions that are in $\mathcal{SH}_m(\Omega)$  and $u|_{\partial \Omega} \in \mathcal{SH}_m(\partial \Omega)$.

\begin{theorem}\label{cor_msubbdvalue}
Let $\Omega$ be a bounded $P_m$-hyperconvex domain in $\mathbb C^n$. Then $u\in \mathcal{SH}_m(\bar \Omega)$ if, and only if, $u\in \mathcal{SH}_m(\Omega)$, and $u\in \mathcal{SH}_m(\partial \Omega)$.
\end{theorem}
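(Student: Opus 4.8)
The plan is to split the equivalence into its two implications, noting that the forward direction is immediate and that essentially all the content sits in the converse. For $(\Rightarrow)$, if $u\in\mathcal{SH}_m(\bar\Omega)$ then $u\in\mathcal{SH}_m(\Omega)$ and $u\in\mathcal{SH}_m(\partial\Omega)$ by Theorem~\ref{thm_pmboundary} part $(1)$, so nothing further is required.

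For the converse, I would assume $u\in\mathcal{SH}_m(\Omega)$ and $u\in\mathcal{SH}_m(\partial\Omega)$ and aim to verify the hypotheses of Theorem~\ref{thm_pmboundary} part $(2)$. The upper semicontinuity requirement comes for free: membership in $\mathcal{SH}_m(\partial\Omega)$ forces $u\in\mathcal{USC}(\partial\Omega)$ directly from Definition~\ref{def_msubkomp}, since $\partial\Omega$ is compact. It then remains to establish the boundary sub-mean-value inequality
\[
u(z)\leq \int_{\bar\Omega} u\,d\mu \qquad \text{for all } z\in\partial\Omega \text{ and all } \mu\in\mathcal{J}_z^m(\bar\Omega)\, .
\]

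The key step, and the only place where $P_m$-hyperconvexity genuinely enters, is to invoke Corollary~\ref{cor2}: on a $P_m$-hyperconvex domain one has $\mathcal{J}_z^m(\bar\Omega)=\mathcal{J}_z^m(\partial\Omega)$ whenever $z\in\partial\Omega$. Hence any $\mu\in\mathcal{J}_z^m(\bar\Omega)$ is in fact a Jensen measure for $\mathcal{SH}_m^o(\partial\Omega)$ with $\operatorname{supp}(\mu)\subseteq\partial\Omega$, so that $\int_{\bar\Omega} u\,d\mu=\int_{\partial\Omega} u\,d\mu$. Applying Definition~\ref{def_msubkomp} to $u\in\mathcal{SH}_m(\partial\Omega)$ on the compact set $\partial\Omega$ gives $u(z)\leq\int_{\partial\Omega} u\,d\mu$, and combining the two identities yields exactly the displayed inequality. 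Theorem~\ref{thm_pmboundary} part $(2)$ then delivers $u\in\mathcal{SH}_m(\bar\Omega)$, completing the proof.

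I expect no computational difficulty; the one point deserving care is the reduction $\int_{\bar\Omega} u\,d\mu=\int_{\partial\Omega} u\,d\mu$, which hinges on the support of $\mu$ lying in $\partial\Omega$ — a fact guaranteed precisely by $P_m$-hyperconvexity through Corollary~\ref{cor2} (equivalently, through the support condition in Theorem~\ref{thm_charPmhx}). Without this hypothesis the measures in $\mathcal{J}_z^m(\bar\Omega)$ may charge the interior, and the boundary data $u|_{\partial\Omega}\in\mathcal{SH}_m(\partial\Omega)$ alone would no longer control $u(z)$ for $z\in\partial\Omega$; this is exactly why the converse can fail for general domains, even $m$-hyperconvex ones.
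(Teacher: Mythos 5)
Your proposal is correct and follows essentially the same route as the paper, whose proof is the one-line citation of Theorem~\ref{thm_pmboundary} and Corollary~\ref{cor2}; you have simply spelled out the details that the paper leaves implicit (the forward direction via part $(1)$, and the converse by using $\mathcal{J}_z^m(\bar\Omega)=\mathcal{J}_z^m(\partial\Omega)$ to reduce the boundary Jensen inequality to Definition~\ref{def_msubkomp} on the compact set $\partial\Omega$, then invoking part $(2)$). All steps check out, including the observation that upper semicontinuity on $\partial\Omega$ is automatic from $u\in\mathcal{SH}_m(\partial\Omega)$.
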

\begin{proof}
It follows from Theorem~\ref{thm_pmboundary} and Corollary~\ref{cor2}.
\end{proof}

As a corollary we obtain that for $P_m$-hyperconvex domains the exhaustion function can be chosen to be strictly $m$-subharmonic and  smooth, as it was announced in Theorem~\ref{thm_charPmhx}.

\begin{corollary}\label{smooth}
Let $\Omega$ be a bounded $P_m$-hyperconvex domain in $\mathbb C^n$. Then $\Omega$ admits a continuous negative exhaustion function which is $m$-subharmonic on $\bar \Omega$, smooth and strictly $m$-subharmonic on $\Omega$.
\end{corollary}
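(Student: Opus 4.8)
The plan is to produce a single function realizing all the required properties, rather than to build it through a limiting procedure; the key point is that membership in $\mathcal{SH}_m(\bar\Omega)$ will be supplied for free by Theorem~\ref{cor_msubbdvalue} once the boundary values are pinned down. First I would observe that, since $\Omega$ is $P_m$-hyperconvex, Theorem~\ref{thm_pmboundary} shows it is also $m$-hyperconvex. Hence, exactly as recalled in the proof of Lemma~\ref{lem}, the results of~\cite{ACH} furnish a negative exhaustion function $\varphi$ for $\Omega$ which is smooth and strictly $m$-subharmonic on $\Omega$. This already secures the two interior requirements of the statement.

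Next I would promote $\varphi$ to a function continuous on $\bar\Omega$. Because $\varphi$ is a negative exhaustion function, for every $\varepsilon>0$ the set $\{\varphi\leq -\varepsilon\}$ is a compact subset of $\Omega$; since a fixed boundary point $z_0\in\partial\Omega$ has positive distance to this compact set, one gets $\varphi(z)\to 0$ as $z\to z_0$. Consequently the extension of $\varphi$ by $0$ on $\partial\Omega$ is continuous on $\bar\Omega$, with $\varphi=0$ on $\partial\Omega$ and $\varphi<0$ on $\Omega$. Thus $\varphi$ is a continuous negative exhaustion function.

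The last step, and the one where $P_m$-hyperconvexity is genuinely used, is to conclude $\varphi\in\mathcal{SH}_m(\bar\Omega)$ via Theorem~\ref{cor_msubbdvalue}. Being smooth and $m$-subharmonic on $\Omega$, we have $\varphi\in\mathcal{SH}_m(\Omega)$, and its boundary restriction is the constant $0$, which trivially lies in $\mathcal{SH}_m(\partial\Omega)$. Since $\Omega$ is $P_m$-hyperconvex, Theorem~\ref{cor_msubbdvalue} applies and yields $\varphi\in\mathcal{SH}_m(\bar\Omega)$, so $\varphi$ has all the announced properties, and the postponed implication $(1)\Rightarrow(5)$ of Theorem~\ref{thm_charPmhx} follows.

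I expect the main subtlety to be conceptual rather than computational, namely recognizing that for a merely $m$-hyperconvex domain this very same $\varphi$ need not belong to $\mathcal{SH}_m(\bar\Omega)$: a Jensen measure with barycenter on $\partial\Omega$ could charge the interior, where $\varphi<0$, violating the defining inequality $\varphi(z)\leq\int\varphi\,d\mu$. It is precisely the support characterization in Theorem~\ref{thm_charPmhx}(4), funnelled through Corollary~\ref{cor2} into the equivalence of Theorem~\ref{cor_msubbdvalue}, that excludes this and makes the last step legitimate. I would therefore treat the first two steps as routine and devote the write-up to checking the hypotheses of Theorem~\ref{cor_msubbdvalue}.
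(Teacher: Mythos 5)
Your proposal is correct and follows essentially the same route as the paper: deduce $m$-hyperconvexity, invoke~\cite{ACH} for a negative exhaustion function smooth and strictly $m$-subharmonic on $\Omega$, and then apply Theorem~\ref{cor_msubbdvalue} to upgrade it to $\mathcal{SH}_m(\bar\Omega)$. The only difference is that you spell out the routine details the paper leaves implicit (the continuous extension by $0$ to $\partial\Omega$ and that the constant boundary restriction lies in $\mathcal{SH}_m(\partial\Omega)$), which is a harmless and indeed welcome addition.
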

\begin{proof}
Since $\Omega$ is also $m$-hyperconvex then there exists a negative exhaustion function $\varphi\in \mathcal {SH}_m(\Omega)\cap \mathcal C^{\infty}(\Omega)$, which is strictly $m$-subharmonic on $\Omega$. Now it follows from Theorem~\ref{cor_msubbdvalue} that $\varphi \in \mathcal {SH}_m(\bar\Omega)$.
\end{proof}

Finally, we can prove that if a domain is locally $P_m$-hyperconvex then it is globally $P_m$-hyperconvex.

\begin{theorem}\label{localization_Pm}
Let $\Omega$ be a bounded domain in $\mathbb C^n$ such that for every $z\in \partial \Omega$ there exists a neighborhood $U_z$ such that $\Omega\cap U_z$ is $P_m$-hyperconvex, then $\Omega$ is $P_m$-hyperconvex.
\end{theorem}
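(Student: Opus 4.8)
The plan is to establish condition~(2) of Theorem~\ref{thm_charPmhx}, that is, to build a single negative continuous exhaustion function for $\Omega$ lying in $\mathcal{SH}_m(\bar\Omega)\cap\mathcal{C}(\bar\Omega)$; once this is in hand, Theorem~\ref{thm_charPmhx} immediately gives that $\Omega$ is $P_m$-hyperconvex. Since $\partial\Omega$ is compact, I would first extract from the hypothesis a finite cover $U_1,\dots,U_N$ of $\partial\Omega$ for which each $\omega_j:=\Omega\cap U_j$ is $P_m$-hyperconvex. Applying condition~(2) of Theorem~\ref{thm_charPmhx} to each $\omega_j$ produces negative continuous exhaustions $\rho_j\in\mathcal{SH}_m(\bar\omega_j)\cap\mathcal{C}(\bar\omega_j)$, normalised so that $-1\le\rho_j<0$ on $\omega_j$ and $\rho_j=0$ on $\partial\omega_j$. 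I then shrink the cover to $V_j\Subset U_j$ still covering $\partial\Omega$, so that $K:=\bar\Omega\setminus\bigcup_jV_j$ is a compact subset of $\Omega$.

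The exhaustion is assembled as a maximum, following the patching device in the proof of Theorem~\ref{localization}. For the interior I use a fixed background term $\Phi(z)=\varepsilon(|z|^2-R^2)$ with $R$ exceeding the diameter of $\bar\Omega$, so that $\Phi$ is smooth, strictly $m$-subharmonic and strictly negative on $\bar\Omega$. With smooth cut-offs $\chi_j$ (vanishing on a core inside $V_j$, equal to $-1$ off $U_j$) and small constants $\eta_j>0$ chosen so that $\eta_j\chi_j+\Phi$ remains $m$-subharmonic, I form the building blocks $f_j=\rho_j+\eta_j\chi_j+\Phi$ on $\bar\omega_j$ (and $-\infty$ elsewhere) and set, schematically,
\[
u=\max\{\Phi,\,f_1,\dots,f_N\}.
\]
Choosing the constants in Gauthier's order — $\varepsilon$ first, so that $\Phi$ dominates every $f_j$ on the parts of $\partial V_j$ at a fixed positive distance from $\partial\Omega$, and then the $\eta_j$ and the cores, so that near $\partial\Omega$ a neighbouring chart dominates any chart being left (where all $\rho$'s are small) — makes the maximum continuous and negative on $\Omega$. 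To recover the exact boundary value $0$ (the residual left by the compensating strictly $m$-subharmonic term $\Phi$), I would let the compensation parameter tend to $0$, obtaining an increasing family whose continuous limit $u_0$ is a genuine negative exhaustion tending to $0$ on $\partial\Omega$.

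To certify membership I invoke the localization Theorem~\ref{localization}: near each point of $\bar\Omega$ the approximant $u$ coincides with a maximum of finitely many building blocks, and each block is $m$-subharmonic on the relevant small piece $\bar\Omega\cap\bar B_z$ — the term $\Phi$ because it is $m$-subharmonic on a neighbourhood of $\bar\Omega$, and each $f_j$ because $\rho_j\in\mathcal{SH}_m(\bar\omega_j)$ restricts to $\mathcal{SH}_m(\bar\Omega\cap\bar B_z)$ by Corollary~\ref{cor} whenever $\bar B_z\subset U_j$, while $\eta_j\chi_j+\Phi$ is $m$-subharmonic by construction. As the maximum of finitely many functions in $\mathcal{SH}_m(\bar\Omega\cap\bar B_z)$ stays in that class by Theorem~\ref{thm_basicprop2}(2), Theorem~\ref{localization} gives $u\in\mathcal{SH}_m(\bar\Omega)\cap\mathcal{C}(\bar\Omega)$; and an increasing limit of such functions that is again continuous lies in $\mathcal{SH}_m(\bar\Omega)$ by Theorem~\ref{cont}(a). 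Thus $u_0$ is the desired exhaustion and Theorem~\ref{thm_charPmhx} finishes the proof.

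The hard part is the gluing itself, not the localization. The local exhaustions $\rho_j$ vanish not only along $\partial\Omega\cap U_j$ but also along the \emph{artificial} inner boundary $\partial U_j\cap\Omega$, so a naive maximum would climb back to $0$ at interior points, destroying both negativity in $\Omega$ and continuity of the patch. Overcoming this is precisely the bookkeeping behind Theorem~\ref{localization}: the penalty $\eta_j\chi_j$ must render each $f_j$ strictly dominated once one leaves its core — by $\Phi$ away from $\partial\Omega$, and by a neighbouring chart near $\partial\Omega$ — and simultaneously the cut-off-compensating background $\Phi$ must be kept small enough, and ultimately removed in the limit, so that the patched function still tends to $0$ at $\partial\Omega$ and is a genuine exhaustion. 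Verifying that these finitely many constants can be selected consistently, and that the residual of the compensating term can be sent to zero monotonically, is the technical heart of the argument.
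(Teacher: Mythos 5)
There is a genuine gap, and it sits exactly where you locate ``the technical heart'': the removal of the compensating background term. For fixed $\varepsilon>0$ your patched function satisfies $u_\varepsilon=\Phi_\varepsilon:=\varepsilon(|z|^2-R^2)$ on $\partial\Omega$: indeed each block obeys $f_j=\rho_j+\eta_j\chi_j+\Phi_\varepsilon\leq\Phi_\varepsilon$ (since $\rho_j\leq0$ and $\chi_j\leq0$), while $u_\varepsilon\geq\Phi_\varepsilon$ by construction. So the boundary values of $u_\varepsilon$ are bounded away from $0$, and for any $c$ with $\min_{\partial\Omega}\Phi_\varepsilon<c<0$ the sublevel set $\{u_\varepsilon<c\}$ clusters on $\partial\Omega$; hence $u_\varepsilon$ is \emph{not} an exhaustion for any fixed $\varepsilon$. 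Nor can you repair this by letting the compensation parameter tend to $0$. The blocks must be dominated at the artificial exits $\partial U_j\cap\Omega$ (where $\rho_j\to0$) by the background \emph{alone}, because portions of $\partial U_j$ lie deep inside $\Omega$ where no neighbouring boundary chart is defined; this forces the background to be a globally defined strictly $m$-subharmonic function such as $\Phi_\varepsilon$, and then $\Phi_\varepsilon\uparrow0$ as $\varepsilon\downarrow0$ \emph{uniformly on all of} $\bar\Omega$, not just on $\partial\Omega$. Since $u_\varepsilon\geq\Phi_\varepsilon$ everywhere, the increasing limit satisfies $u_0\geq0$, i.e.\ $u_0\equiv0$ (equivalently, with $\eta_j\lesssim\varepsilon$ forced by $m$-subharmonicity of $\eta_j\chi_j+\Phi_\varepsilon$, one computes $u_0=\max\{0,\rho_1,\dots,\rho_N\}\equiv0$). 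The ``continuous limit'' is the zero function, not an exhaustion. The deeper point is that Gauthier-type fusion, as in Theorem~\ref{localization}, works there because all charts approximate one fixed global function $u$, which is what makes the compatibility condition~(\ref{eq:etaepsilon}) available; in your setting no global anchor exists --- its existence is essentially what is to be proved --- and the local exhaustions' vanishing along the artificial boundaries cannot be neutralized by a penalty that itself must vanish at $\partial\Omega$.

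The paper does not manufacture the exhaustion by gluing at all. It observes that local $P_m$-hyperconvexity gives local $m$-hyperconvexity, and imports Theorem~3.3 of~\cite{ACH} (a Kerzman--Rosay-type result) to conclude that $\Omega$ is globally $m$-hyperconvex, producing $\psi\in\mathcal{SH}_m(\Omega)\cap\mathcal{C}(\bar\Omega)$, $\psi\not\equiv0$, $\psi|_{\partial\Omega}=0$. The entire remaining work is the \emph{upgrade} $\psi\in\mathcal{SH}_m(\bar\Omega)$, done via Theorem~\ref{localization} as you intend, but with the local verification carried out by Jensen measures through Corollary~\ref{cor_msubbdvalue}: for interior balls $\mathcal{J}_z^m(\bar B(z,r))=\{\delta_z\}$; at $z\in\partial\Omega\cap B_{z_0}$ the local $P_m$-exhaustion of $\Omega\cap U_{z_0}$ forces every $\mu\in\mathcal{J}_z^m(\partial(\Omega\cap B_{z_0}))$ to sit on $\partial\Omega$, where $\psi=0$; and at $z\in\bar\Omega\cap\partial B_{z_0}$ the extension Theorem~\ref{ext_in_pm_hyp} applied on the ball produces a peak function showing $\mathcal{J}_z^m(\partial(\Omega\cap B_{z_0}))=\{\delta_z\}$. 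If you wish to keep your architecture, you must first prove (or cite) the analogue of~\cite[Theorem~3.3]{ACH} supplying a global $m$-subharmonic exhaustion to anchor the interior; your proposal currently replaces that nontrivial theorem with a limiting step that degenerates.
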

\begin{proof} Assume that $\Omega$ is locally $P_m$-hyperconvex. Then it is also locally $m$-hyper\-con\-vex. By Theorem~3.3 in~\cite{ACH}, we know that $\Omega$ must be globally $m$-hyperconvex. Thus, there exists $\psi \in \mathcal{SH}_m(\Omega)\cap \mathcal{C}(\bar \Omega)$, $\psi \not\equiv 0,$ such that $\psi|_{\partial \Omega}=0$. We shall now show that $\psi \in \mathcal{SH}_m(\bar \Omega)$. Thanks to Theorem~\ref{localization} it is enough to show that for every $z \in \bar \Omega$ there is a ball $B_z$ such that $\psi|_{\bar \Omega \cap \bar B_z} \in \mathcal{SH}_m(\bar \Omega \cap \bar B_z)$.

For $z \in \Omega$ there exists $r>0$ such that $B(z,r)\Subset \Omega$ and then $\psi|_{B(z,r)}\in \mathcal {SH}_m(B(z,r))\cap \mathcal C(\bar B(z,r))$. Since,
\begin{equation}\label{ball}
\mathcal J^m_z(\bar B(z,r))=\mathcal J^m_z(\partial B(z,r))=\{\delta_z\}\, ,
 \end{equation}
we have that $\psi\in\mathcal {SH}_m(\partial B(z,r))$ and therefore by Corollary~\ref{cor_msubbdvalue} we have that $\psi\in\mathcal {SH}_m(\bar B(z,r))$.

Now it is sufficient to look at $z \in \partial \Omega$. Fix $z_0 \in \partial \Omega$, and a small ball $B_{z_0}$ around $z_0$. Without loss of generality assume that $B_{z_0} \Subset U_{z_0}$ such that $\Omega \cap B_{z_0}$ is $P_m$-hyperconvex. Once again, by Corollary \ref{cor_msubbdvalue} it is enough to show that $\psi \in \mathcal{SH}_m\bigl(\partial (\Omega \cap B_{z_0})\bigr)$, i.e. for every $z \in \partial (\Omega \cap B_{z_0})$, and every $\mu \in \mathcal{J}_{z}^m(\partial (\Omega \cap B_{z_0}))$, it holds
\begin{equation}\label{eq:locally}
\psi(z)\leq \int \psi \, d\mu\, .
\end{equation}
Suppose that $z \in \partial \Omega \cap B_{z_0}\setminus \partial B_{z_0}$. First we shall show that if $\mu \in \mathcal{J}_z^m(\partial(\Omega \cap B_{z_0}))$, then $\mu$ has support on $\partial \Omega$ and therefore condition $(\ref{eq:locally})$ will be fulfilled. Since $\Omega \cap U_{z_0}$ is $P_m$-hyperconvex, it has an exhaustion function $\varphi \in \mathcal{SH}_m(\bar \Omega \cap \bar {U}_{z_0})$,  and especially $\varphi \in \mathcal{SH}_m\left(\partial(\bar \Omega \cap \bar {B}_{z_0})\right)$. Let $\mu \in \mathcal{J}_z^m\left(\partial(\Omega \cap B_{z_0})\right)$, then we have
\[
0=\varphi(z)\leq \int \varphi \, d\mu \leq 0\, ,
\]
which means that $\mu$ has support where $\varphi=0$, i.e. on $\partial \Omega$.\\

Next, suppose that $z \in \bar\Omega \cap \partial B_{z_0}$. We claim that
\[
\mathcal{J}_{z}^m\left(\partial(\Omega \cap B_{z_0})\right)=\{\delta_z\}\, ,
\]
and this makes that $(\ref{eq:locally})$ holds. From (\ref{ball}) and from Theorem~\ref{ext_in_pm_hyp} we know that for every $z \in \bar \Omega \cap \partial B_{z_0}$ there exists a function $\varphi \in \mathcal {SH}_m(\bar B_{z_{0}})\subseteq\mathcal{SH}_m\left(\bar\Omega \cap \bar B_{z_0}\right)$ such that $\varphi(z)=0$ and $\varphi(\xi)<0$ for every $\xi \neq z$. By the same argument as above, we see that $\mathcal{J}_{z}^m\left(\partial(\Omega \cap B_{z_0})\right)=\{\delta_z\}$.
\end{proof}

\section{Some concluding remarks on approximation}\label{sec approx}

Approximation is a central part of analysis. The type of approximation needed depends obviously on the situation at hand.  In connection with Theorem~\ref{cont} one can ask the following question. Let $\Omega\subseteq\C^n$ be a bounded and open set, and let $1\leq m\leq n$. Under what assumptions on $\Omega$ do we have that
\begin{equation}\label{question}
\mathcal{SH}_m(\Omega) \cap \mathcal C(\bar \Omega)=\mathcal{SH}_m(\bar\Omega) \cap \mathcal C(\bar \Omega)?
\end{equation}

In the case when $m=1$, this type of theorem can be traced back to the work of Walsh~\cite{Walsh}, Keldysh~\cite{Keldych}, and
Deny~\cite{deny, deny2}, where they considered harmonic functions. In the harmonic case, some call this theorem the approximation theorem of Keldysh-Brelot after the contributions~\cite{Keldych,Brelot, Brelot2}. For subharmonic functions this type of approximation is included in the inspiring work of Bliedtner and Hansen~\cite{BlietnerHansen2} (see also~\cite{BlietnerHansen1,Hansen}). The articles mentioned are in a very general
setting. For us here it suffice to mention:

\begin{theorem}\label{thm_question}
Let $\Omega$ be a bounded domain in $\RE^n$. The following assertions are then equivalent:
\begin{enumerate}\itemsep2mm
\item for each $u$ in $\mathcal{SH}(\Omega) \cap \mathcal C(\bar \Omega)$ and each $\varepsilon>0$ there is a function $v$ in $\mathcal{SH}(\bar\Omega) \cap \mathcal C(\bar \Omega)$ such that $|u-v|<\varepsilon$ on $\bar\Omega$;

\item the sets $\RE^n\backslash\bar{\Omega}$, and $\RE^n\backslash \Omega$, are thin at the same points of $\bar\Omega$.
\end{enumerate}
\end{theorem}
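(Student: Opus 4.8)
The plan is to run condition (1) through the compact-set machinery of Section~\ref{sec Jensen and msh} specialized to $m=1$, reducing it to a pointwise sub-mean value inequality on $\partial\Omega$, and then to translate that inequality into the thinness statement (2) by classical balayage and fine-potential theory.

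First I would observe that $\mathcal{SH}(\bar\Omega)\cap\mathcal{C}(\bar\Omega)$ is closed under uniform convergence: a uniform limit of functions satisfying the Jensen inequality of Definition~\ref{def_msubkomp} again satisfies it, and continuity is preserved. Since $\mathcal{SH}(\bar\Omega)\cap\mathcal{C}(\bar\Omega)\subseteq \mathcal{SH}(\Omega)\cap\mathcal{C}(\bar\Omega)$ by Theorem~\ref{thm_pmboundary}(1), assertion (1) is therefore equivalent to the inclusion $\mathcal{SH}(\Omega)\cap\mathcal{C}(\bar\Omega)\subseteq\mathcal{SH}(\bar\Omega)$. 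Next, for $u\in\mathcal{SH}(\Omega)\cap\mathcal{C}(\bar\Omega)$ Theorem~\ref{thm_pmboundary}(2) says that $u\in\mathcal{SH}(\bar\Omega)$ holds if and only if $u(z)\leq\int u\,d\mu$ for every $z\in\partial\Omega$ and every $\mu\in\mathcal{J}_z^1(\bar\Omega)$. Thus (1) is equivalent to the family of boundary conditions $(\mathrm{SM}_z)$: \emph{$u(z)\leq\int u\,d\mu$ for all $u\in\mathcal{SH}(\Omega)\cap\mathcal{C}(\bar\Omega)$ and all $\mu\in\mathcal{J}_z^1(\bar\Omega)$}, one for each $z\in\partial\Omega$. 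It then suffices to prove, for a fixed $z\in\partial\Omega$, that $(\mathrm{SM}_z)$ holds if and only if the thinness condition holds at $z$; unwinding the latter, and using that $\RE^n\setminus\bar\Omega\subseteq\RE^n\setminus\Omega$ (so thinness of the larger set automatically forces thinness of the smaller), the thinness condition at $z$ says precisely that one never has $\RE^n\setminus\bar\Omega$ thin at $z$ while $\partial\Omega$ is non-thin at $z$.

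The heart of the matter, and the step I expect to be the main obstacle, is this local equivalence, where genuine potential theory enters and which is exactly the content imported from Bliedtner and Hansen~\cite{BlietnerHansen2}. The tool is the identification of $\mathcal{J}_z^1(\bar\Omega)$, via Edwards duality (Theorem~\ref{thm_edwards}) and Cartan's theorem, with the probability measures on $\bar\Omega$ dominated by $\delta_z$ in the balayage order, equivalently with swept-out measures of $\delta_z$ onto closed subsets of $\bar\Omega$. For the direction ``thinness $\Rightarrow(\mathrm{SM}_z)$'' I would split into two sub-cases. If $\RE^n\setminus\Omega$ is thin at $z$, then so is $\partial\Omega$ (a subset of a thin set is thin), so $\Omega\cup\{z\}$ is a fine neighbourhood of $z$; a function $u\in\mathcal{SH}(\Omega)\cap\mathcal{C}(\bar\Omega)$ then extends across the removable thin singularity $\{z\}$ as an upper semicontinuous finely subharmonic function near $z$, and Fuglede's theory of finely subharmonic functions yields the sub-mean inequality against every balayage measure $\mu$. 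If instead $\RE^n\setminus\bar\Omega$ is non-thin at $z$, then $z$ lies in the fine closure of the exterior and is a regular point of it, and one pushes the inequality through using the continuity of $u$ on $\bar\Omega$ together with the potential domination $U^\mu\le U^{\delta_z}$; this regular sub-case is the technically delicate one.

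For the converse I would argue by contraposition, reproducing in general form the ``ridge across a non-polar crack'' phenomenon. Suppose the thinness condition fails at $z$, so $\RE^n\setminus\bar\Omega$ is thin at $z$ but $\partial\Omega$ is non-thin at $z$. Thinness of the exterior gives a fine neighbourhood of $z$ contained in $\bar\Omega$, which lets one produce a genuine Jensen measure $\mu\in\mathcal{J}_z^1(\bar\Omega)$ spreading mass around $z$ (morally a normalised sweep onto a small sphere about $z$, admissible because the portion of that sphere lying outside $\bar\Omega$ is thin). Non-thinness of $\partial\Omega$ at $z$ then permits the construction of $u\in\mathcal{SH}(\Omega)\cap\mathcal{C}(\bar\Omega)$, a reduced/harmonic function carrying a peak along the non-thin part of $\partial\Omega$ through $z$, for which $u(z)>\int u\,d\mu$; hence $(\mathrm{SM}_z)$ fails and, by the reduction above, $u$ cannot be uniformly approximated by functions in $\mathcal{SH}(\bar\Omega)\cap\mathcal{C}(\bar\Omega)$. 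Combining the two implications over all $z\in\partial\Omega$ gives the stated equivalence. As noted, essentially all the weight sits in the balayage and fine-topology input of the third step, so in the write-up I would either develop the needed facts (Wiener's criterion, Cartan's theorem, and the calculus of finely subharmonic functions) or cite them from~\cite{BlietnerHansen2} and the standard references.
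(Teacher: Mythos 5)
First, a remark on the benchmark: the paper does not prove Theorem~\ref{thm_question} at all --- it is quoted as a known result of the Keldysh--Brelot/Bliedtner--Hansen circle of ideas, with the proof residing in~\cite{Keldych,Brelot,BlietnerHansen2} and the survey~\cite{Hedberg}. So your proposal is measured against the classical literature rather than against an argument in the text. Your opening reductions are sound: $\mathcal{SH}(\bar\Omega)\cap\mathcal{C}(\bar\Omega)$ is indeed uniformly closed (pass to the limit in the Jensen inequality against probability measures), so assertion (1) is equivalent to the equality $\mathcal{SH}(\Omega)\cap\mathcal{C}(\bar\Omega)=\mathcal{SH}(\bar\Omega)\cap\mathcal{C}(\bar\Omega)$, i.e.\ to~(\ref{question}) with $m=1$; and the $m=1$ instances of Theorems~\ref{thm_edwards}, \ref{cont} and \ref{thm_pmboundary} do transplant verbatim from $\C^n$ to $\RE^n$, giving your criterion that (1) holds iff $u(z)\leq\int u\,d\mu$ for all $z\in\partial\Omega$, all $\mu\in\mathcal{J}_z^1(\bar\Omega)$, and all $u\in\mathcal{SH}(\Omega)\cap\mathcal{C}(\bar\Omega)$.

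The genuine gap is your claimed pointwise equivalence ``$(\mathrm{SM}_z)$ holds iff the thinness condition holds at $z$'', and specifically your case (i): thinness of $\RE^n\setminus\Omega$ at the single point $z$ does \emph{not} imply $(\mathrm{SM}_z)$, because measures in $\mathcal{J}_z^1(\bar\Omega)$ are global objects whose support can reach boundary points where condition (2) fails, and no local fine-topological argument (removable singularity plus finely subharmonic extension near $z$) can control them. Concretely, in $\RE^2$ take $B=B(0,1)$, $K=[-1/2,1/2]\times\{0\}$, $p=(0,1/2)$, and $\Omega=B\setminus(K\cup\{p\})$. Then $\RE^2\setminus\Omega$ is thin at $p$ (a point, a closed set not accumulating at $p$, and the far-away exterior), so your case (i) applies at $z=p$; yet $(\mathrm{SM}_p)$ fails: let $v=\hat{R}_1^K$ be the regularized reduite of $1$ on $K$ relative to $B$, which is harmonic on $B\setminus K\supset\Omega$ and continuous on $\bar\Omega=\bar B$ (every point of the continuum $K$ is regular), so $v\in\mathcal{SH}(\Omega)\cap\mathcal{C}(\bar\Omega)$; and let $\mu$ be the harmonic measure of $p$ for the disc $D=B(0,0.8)\supset K\cup\{p\}$, which lies in $\mathcal{J}_p^1(\bar\Omega)$ since every function subharmonic near $\bar B$ satisfies the sub-mean inequality over $\partial D$. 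The Riesz decomposition $v=H_v^D+G_D\nu$ with $\nu\neq 0$ the capacitary measure on $K\subset D$ gives $v(p)=\int v\,d\mu+G_D\nu(p)>\int v\,d\mu$. This is consistent with the theorem --- condition (2) fails at the points of $K$ --- but it shows your forward implication, which consumes only the hypothesis at the single point $z$, is unsound as argued; the implication (2) $\Rightarrow$ (1) must use the thinness dichotomy at \emph{every} boundary point simultaneously (e.g.\ by showing that under (2) the balayages relative to $\Omega$ and $\bar\Omega$ agree, as in~\cite{BlietnerHansen2}). Beyond this error, the remaining steps (identification of $\mathcal{J}_z^1(\bar\Omega)$ with balayage-dominated measures, the ``regular sub-case'', the peak construction in the contrapositive) are programmatic appeals to exactly the classical results the paper cites, so even with the localization repaired the proposal is an outline rather than a proof.
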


\noindent For further information on the case $m=1$ we refer to the inspiring survey written by Hedberg~\cite{Hedberg} (see also~\cite{gardiner}).

If we look at the other end case of the Caffarelli-Nirenberg-Spruck model, when $m=n$, and we are in the world of pluripotential, then our approximation question bear resemblance with the so called Mergelyan approximation of holomorphic
function. Therefore, some call~(\ref{question}) the $\mathcal{PSH}$-Mergelyan property (see e.g.~\cite{H}).
The first positive result for the $\mathcal{PSH}$-Mergelyan property is due to Sibony. In 1987, he proved in~\cite{S} that every smoothly bounded and pseudoconvex domain has this property. Later Forn\ae ss and Wiegerinck~\cite{FW} generalized this in their beautiful paper to arbitrary domains with $C^1$-boundary. Recently, Persson and Wiegerinck~\cite{PW} proved that a domain of which the boundary is continuous with the possible exception of a countable set of boundary points, has the $\mathcal{PSH}$-Mergelyan property (this generalize~\cite{avelin2,H}). Furthermore, in~\cite{PW} they constructed very enlightening examples that show  that there can be no corresponding Theorem~\ref{thm_question} in the case $m=n$.

 \bigskip

 At this point there is no satisfactory answer to question~(\ref{question}) within the Caffarelli-Nirenberg-Spruck framework that covers
  the knowledge of the end cases $m=1$, and $m=n$. Even so, in Theorem~\ref{bm-reg} we give a family of bounded domains that satisfies~(\ref{question}), and we prove several characterizations of this type of domains. Obviously, there are domains that satisfies~(\ref{question}), and is not included in Theorem~\ref{bm-reg}. For further information, and inspiration, on  approximation we refer to~\cite{gardiner,Gauthier2} and the references therein.

\begin{theorem} \label{bm-reg}
Assume that $\Omega$ is a bounded domain in $\C^n$, $n\geq 2$, $1\leq m\leq n$. Then the following assertions are equivalent:

\medskip

 \begin{enumerate}\itemsep2mm

\item  for every continuous function $f:\partial\Omega\to \RE$  we have that
\[
\operatorname{PB}^m_f\in\mathcal{SH}_m(\bar{\Omega})\cap \mathcal C(\bar {\Omega})\, ,
\text{ and } \operatorname{PB}^m_f=f \text{ on } \partial \Omega\, ,
\]
where
\[
\operatorname{PB}^m_f(z)=\sup\Bigg\{v(z): v\in\mathcal{SH}_m(\bar \Omega),\; v(\xi)\leq f(\xi)\, , \;\; \forall \xi\in\partial\Omega\Bigg\}\, ;
\]

\item $\partial\Omega$ has a strong barrier at every point $z_0\in\partial\Omega$ that is $m$-subharmonic on $\bar{\Omega}$, i.e. there exists a $m$-subharmonic function $u:\bar{\Omega}\to\RE$ such that
     \[
     \lim_{x\to y_0\atop x\in\Omega} u(x)=0\, ,
     \]
     and
     \[
      \limsup_{x\to y\atop x\in\Omega} u(x)<0 \qquad  \text{ for all }  y\in\bar{\Omega}\backslash\{y_0\}\, ;
     \]

\item $\Omega$ admits an exhaustion function $\varphi$ that is negative, continuous, $m$-subharmonic on  $\bar{\Omega}$, smooth on $\Omega$, and such that
    \[
   \left(\varphi(z)-|z|^2\right)\in \mathcal{SH}_m(\bar{\Omega})\, ;
    \]

 \item  for every $z\in \partial \Omega$ we have that $\mathcal{J}_z^m(\bar \Omega)=\{\delta_z\}$.

  \end{enumerate}
 \end{theorem}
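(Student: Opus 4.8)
The plan is to run the cycle $(1)\Rightarrow(2)\Rightarrow(4)\Rightarrow(1)$ and then to fold in $(3)$ via $(4)\Rightarrow(3)$ and $(3)\Rightarrow(4)$. Condition $(4)$ is the natural hub: it is the bare statement that every boundary Jensen measure is trivial, and each of the other three conditions is a way of encoding or exploiting this. The three implications into $(4)$ are the easy, barrier-type ones. For $(1)\Rightarrow(2)$, fix $z_0\in\partial\Omega$ and apply $(1)$ to the continuous datum $f(\xi)=-|\xi-z_0|^2$; the resulting $u:=\operatorname{PB}^m_f\in\mathcal{SH}_m(\bar\Omega)\cap\mathcal C(\bar\Omega)$ equals $f$ on $\partial\Omega$, so $u(z_0)=0$ and $u<0$ on $\partial\Omega\setminus\{z_0\}$. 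Since the boundary values are $\le 0$ and an $m$-subharmonic function is subharmonic, hence obeys the strong maximum principle, either $u\equiv 0$ (impossible, as $f\not\equiv0$) or $u<0$ throughout $\Omega$; thus $u$ is the required strong barrier. For $(2)\Rightarrow(4)$, a strong barrier $u$ at $z_0$ satisfies $u(z_0)=0$ and $u<0$ elsewhere, so for every $\mu\in\mathcal J_{z_0}^m(\bar\Omega)$ the chain $0=u(z_0)\le\int u\,d\mu\le0$ forces $\int u\,d\mu=0$, whence $\mu=\delta_{z_0}$.

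For $(4)\Rightarrow(1)$, extend $f$ to $\tilde f\in\mathcal C(\bar\Omega)$ and consider the full envelope $S\tilde f(z)=\sup\{v(z):v\in\mathcal{SH}_m(\bar\Omega),\,v\le\tilde f\}$. By Edwards' duality (Theorem~\ref{thm_edwards}(b)) one has $S\tilde f(z)=\inf\{\int\tilde f\,d\mu:\mu\in\mathcal J_z^m(\bar\Omega)\}$, so at $z\in\partial\Omega$ condition $(4)$ gives $S\tilde f(z)=\tilde f(z)=f(z)$. Since the constraint defining $\operatorname{PB}^m_f$ is weaker, $\operatorname{PB}^m_f\ge S\tilde f$, while the boundary constraint directly gives $\operatorname{PB}^m_f(z_0)\le f(z_0)$; hence $\operatorname{PB}^m_f=f$ on $\partial\Omega$. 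The usual Perron argument shows $(\operatorname{PB}^m_f)^*\in\mathcal{SH}_m(\Omega)$, the sandwich with $S\tilde f$ gives the correct boundary limits, and the generalized Walsh theorem (Proposition~3.2 in~\cite{Blocki_weak}, used already in Theorem~\ref{thm_charPmhx}) upgrades this to $\operatorname{PB}^m_f\in\mathcal C(\bar\Omega)$. Membership in $\mathcal{SH}_m(\bar\Omega)$ then follows from Theorem~\ref{thm_pmboundary}(2), whose boundary test is trivially satisfied because $(4)$ makes every $\mu\in\mathcal J_z^m(\bar\Omega)$, $z\in\partial\Omega$, equal to $\delta_z$.

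For $(3)\Rightarrow(4)$, fix $z_0\in\partial\Omega$ and $\mu\in\mathcal J_{z_0}^m(\bar\Omega)$. As $\varphi$ is a negative exhaustion, $\varphi(z_0)=0$, and $\varphi\le0$ with $\varphi(z_0)=0$ give $\int\varphi\,d\mu=0$. Testing the $m$-subharmonic function $\varphi-|z|^2$ against $\mu$ yields $\int|z|^2\,d\mu\le|z_0|^2$, while $|z|^2\in\mathcal{SH}_m^o(\bar\Omega)$ gives the reverse inequality, so $\int|z|^2\,d\mu=|z_0|^2$. Because real parts of holomorphic polynomials are $m$-harmonic, $\mu$ reproduces the pluriharmonic part of $|z-z_0|^2$, and expanding $|z-z_0|^2=|z|^2-2\operatorname{Re}(z\cdot\bar z_0)+|z_0|^2$ gives $\int|z-z_0|^2\,d\mu=0$, forcing $\mu=\delta_{z_0}$. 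Conversely, for $(4)\Rightarrow(3)$ I would use $(4)\Rightarrow(1)$ to solve the Dirichlet problem with datum $-|z|^2$: set $\psi:=\operatorname{PB}^m_{-|z|^2}\in\mathcal{SH}_m(\bar\Omega)\cap\mathcal C(\bar\Omega)$, so $\psi=-|z|^2$ on $\partial\Omega$, and put $\varphi_0:=\psi+|z|^2$. Then $\varphi_0$ is continuous and $m$-subharmonic on $\bar\Omega$, vanishes on $\partial\Omega$, is $<0$ in $\Omega$ by the strong maximum principle (it is not $\equiv0$, since $-|z|^2\notin\mathcal{SH}_m$), and satisfies $\varphi_0-|z|^2=\psi\in\mathcal{SH}_m(\bar\Omega)$; thus it meets every requirement of $(3)$ except smoothness on $\Omega$.

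The step I expect to be the main obstacle is exactly this last one: upgrading $\varphi_0$ to a function $\varphi$ that is smooth on all of $\Omega$ while remaining continuous and $m$-subharmonic up to the boundary and, crucially, still satisfying $\varphi-|z|^2\in\mathcal{SH}_m(\bar\Omega)$. The envelope $\varphi_0$ is generically only continuous, so a Richberg-type regularization is needed — convolving on an exhausting sequence of compact subsets and patching consecutive smoothings with regularized maxima — with the approximation parameters tuned so that the strict margin encoded by the term $|z|^2$ and the boundary values (hence the exhaustion property) are preserved throughout. Controlling all of these simultaneously, in particular keeping the \emph{global} $m$-subharmonicity of both $\varphi$ and $\varphi-|z|^2$ on the compact set $\bar\Omega$ under a smoothing carried out uniformly up to $\partial\Omega$, is the delicate part; the continuity-up-to-the-boundary step in $(4)\Rightarrow(1)$ via the generalized Walsh theorem is the only other point requiring care.
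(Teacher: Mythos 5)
Your overall architecture is sound, and several of your steps are correct while taking a genuinely different route from the paper: your $(3)\Rightarrow(4)$, which integrates the identity $|z-z_0|^2=|z|^2-2\operatorname{Re}(z\bar z_0)+|z_0|^2$ directly against $\mu$ (using that $\pm\operatorname{Re}(z\bar z_0)$ and $|z|^2$ lie in $\mathcal{SH}_m^o(\bar\Omega)$), is cleaner than the paper's corresponding step, which instead proves $(3)\Rightarrow(2)$ by pushing the same identity to $\partial\Omega$ and invoking the extension theorem (Theorem~\ref{ext_in_pm_hyp}) to build a strong barrier. Likewise, in $(4)\Rightarrow(1)$ you replace the paper's use of Theorem~\ref{ext_in_pm_hyp} (which produces $F\in\mathcal{SH}_m(\bar\Omega)\cap\mathcal{C}(\bar\Omega)$ with $F=f$ on $\partial\Omega$) by Edwards' duality (Theorem~\ref{thm_edwards}(b)) applied to a continuous extension $\tilde f$; this works, with one patch: the sandwich $S\tilde f\le \operatorname{PB}^m_f$ only gives the \emph{lower} boundary estimate (via lower semicontinuity of $S\tilde f$, itself a consequence of Theorem~\ref{thm_edwards}(b)). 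For $\limsup_{\zeta\to\xi}\operatorname{PB}^m_f(\zeta)\le f(\xi)$ you must compare each competitor $v$ (subharmonic on $\Omega$, upper semicontinuous on $\bar\Omega$, $v\le f$ on $\partial\Omega$) with the harmonic solution $H_f$, using that $\Omega$ is a regular domain; regularity is available because $(4)$ forces $\operatorname{supp}(\mu)\subseteq\partial\Omega$, hence $P_m$-hyperconvexity by Theorem~\ref{thm_charPmhx}, hence hyperconvexity. The paper is comparably terse at this point, so this is a minor repair, not a structural flaw.

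The genuine gap is exactly the one you flagged: the smoothing step in $(4)\Rightarrow(3)$. Your candidate $\varphi_0=\operatorname{PB}^m_{-|z|^2}+|z|^2$ is a nice observation, but the proposed Richberg-type regularization ``carried out uniformly up to $\partial\Omega$'' so as to preserve membership in $\mathcal{SH}_m(\bar\Omega)$ is not something the available tools deliver: $\mathcal{SH}_m(\bar\Omega)$ is defined nonlocally through Jensen measures on the compact set $\bar\Omega$, and no Richberg theorem for $m$-subharmonic functions is established here at that level. The paper sidesteps the difficulty entirely by a two-step reduction you could have used as well. First, under $(4)$ one has $\mathcal{J}_z^{c,m}\subseteq\mathcal{J}_z^m(\bar\Omega)=\{\delta_z\}$ for $z\in\partial\Omega$, and then Theorem~4.3 of~\cite{ACH} is cited to produce an exhaustion $\varphi$ that is negative, smooth and $m$-subharmonic on $\Omega$, continuous on $\bar\Omega$, with $\varphi-|z|^2\in\mathcal{SH}_m(\Omega)$ --- i.e., all the smoothness content is outsourced to an \emph{interior} statement proved elsewhere. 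Second, the boundary half of the membership is free: $(4)$ makes every upper semicontinuous (in particular every continuous) function on $\partial\Omega$ belong to $\mathcal{SH}_m(\partial\Omega)$, so $\varphi-|z|^2\in\mathcal{SH}_m(\partial\Omega)$, and Theorem~\ref{cor_msubbdvalue} (applicable since $\Omega$ is $P_m$-hyperconvex) upgrades $\varphi-|z|^2$ to $\mathcal{SH}_m(\bar\Omega)$. In other words, no smoothing up to the boundary is ever needed; the delicate object you were trying to construct by hand is precisely the content of the cited external theorem, and without that citation (or a reproof of its construction) your $(4)\Rightarrow(3)$ remains incomplete.
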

\begin{proof}

$(1)\Rightarrow(4):$ Fix $z\in \partial \Omega$, $\mu\in \mathcal J_z^m(\bar \Omega)$, and let $f$ be a real-valued continuous function defined on $\partial \Omega$ such that $f(z)=0$ and $f(\xi)<0$ for $\xi \neq z$. Then it holds that
\[
0=\operatorname{PB}^m_f(z)\leq \int\operatorname{PB}^m_f\,d\mu\leq 0\, ,
\]
and therefore it follows that $\operatorname{supp}(\mu)\subseteq \{z\}$, Thus, $\mu=\delta_z$.

\medskip

$(4)\Rightarrow(1):$  First note that it follows from (4) that every continuous functions defined on the boundary $\partial\Omega$ is $m$-subharmonic on $\partial\Omega$ in the sense of Definition~\ref{def_msubkomp}. Let $f\in \mathcal C(\partial \Omega)$, then by Theorem~\ref{ext_in_pm_hyp} and Theorem~\ref{thm_charPmhx} there exists a function $F\in \mathcal {SH}_m(\bar \Omega)\cap \mathcal C(\bar \Omega)$ such that $F=f$ on $\partial \Omega$. Let us define
\[
\textbf{S}_f(z)=\sup\Bigg\{v(z): v\in\mathcal{SH}_m(\Omega),\; \varlimsup_{\zeta\rightarrow\xi \atop \zeta\in\Omega}v(\zeta)\leq f(\xi)\, , \;\; \forall \xi\in\partial\Omega\Bigg\}\, .
\]
For $z\in \Omega$, we have that
\[
F(z)\leq \operatorname{PB}^m_f(z)\leq \textbf{S}_f(z)\, ,
\]
and then
\[
\lim_{\zeta\to \xi}\textbf{S}_f(\zeta)=f(\xi)\,\qquad \text{for all } \xi\in \partial \Omega\, .
\]
Thanks to the the generalized Walsh theorem (Proposition~3.2 in~\cite{Blocki_weak}) we get that $\textbf{S}_f\in \mathcal {SH}_m(\Omega)\cap \mathcal C(\bar \Omega)$, and by Theorem~\ref{cor_msubbdvalue},  $\textbf{S}_f\in \mathcal {SH}_m(\bar \Omega)$. Hence,
$\operatorname{PB}^m_f=\textbf{S}_f$ and the proof is finished.

\medskip

$(1)\Rightarrow(2):$ Fix $z\in \partial \Omega$, and let $f$ be a continuous function on $\partial \Omega$ such that $f(z)=0$ and $f(\xi)<0$ for $\xi \neq z$. Then the function $\operatorname{PB}^m_f$ is a strong barrier at $z$.

\medskip

$(2)\Rightarrow(4):$ Fix $z\in \partial \Omega$, $\mu\in \mathcal J_z^m(\bar \Omega)$, and let $u_z$ be a strong barrier at $z$. Then it holds
that
\[
0=u_z(z)\leq \int u_z\,d\mu\leq 0\, ,
\]
and therefore $\operatorname{supp}(\mu)\subseteq \{z\}$. Thus, $\mu=\delta_z$.

\medskip

$(4)\Rightarrow(3):$ Let $\mathcal J_z^{c,m}$ be the class of Jensen measures defined by continuous $m$-subharmonic functions on $\Omega$ (see~\cite{ACH}), i.e. $\mu\in \mathcal J_z^{c,m}$ if
 \[
  u(z)\leq \int_{\bar{\Omega}} u \, d\mu\, , \text{ for all } u  \in \mathcal{SH}_m(\Omega) \cap \mathcal C(\bar \Omega)\, .
  \]
Let $z\in \partial \Omega$, and note that $\mathcal {SH}_m^o(\bar \Omega)\subseteq \mathcal{SH}_m(\Omega) \cap \mathcal C(\bar \Omega)$, so
\[
\mathcal J_z^{c,m}\subseteq \mathcal J_z^m(\bar \Omega)=\{\delta_z\}\, .
\]
Therefore by Theorem~4.3 in~\cite{ACH}, there exits an exhaustion function $\varphi$ that is negative, smooth,  $m$-subharmonic on $\Omega$, continuous on $\bar \Omega$, and such that
    \[
   \left(\varphi(z)-|z|^2\right)\in \mathcal{SH}_m(\Omega)\, .
    \]
Condition (4) implies that every continuous function defined on the boundary is also $m$-subharmonic. This means that $(\varphi(z)-|z|^2)\in \mathcal{SH}_m(\partial\Omega)$. Finally, Theorem~\ref{cor_msubbdvalue} gives  us that $(\varphi(z)-|z|^2)\in \mathcal {SH}_m(\bar \Omega)$.

\medskip

$(3)\Rightarrow(2):$ Condition (3) implies that for $z\in\partial\Omega$ we have that
\[
\varphi(z)-|z|^2=-|z|^2\in \mathcal {SH}_m(\partial \Omega)\, .
\]
Take $z_0\in \partial \Omega$, and note that
\[
-|z-z_0|^2=-|z|^2+z\bar z_0+\bar zz_0-|z_0|^2\in \mathcal {SH}_m(\partial \Omega)\, .
\]
Theorem~\ref{ext_in_pm_hyp} and Theorem ~\ref{thm_charPmhx} imply that there exists $F\in \mathcal {SH}_m(\bar \Omega)\cap \mathcal C(\bar \Omega)$ such that $F=-|z-z_0|^2$ on $\partial \Omega$. The function $F$ is a strong barrier at $z_0$.
\end{proof}

\end{document}